\documentclass[review]{elsarticle}

\usepackage{amssymb,amsmath,amsthm,bm,        
	makecell, booktabs, diagbox,              
	algorithm,algpseudocode,      			  
	tikz,pgfplots,subfigure,adjustbox,        
	natbib}                                   
\usetikzlibrary{calc,matrix, shapes, arrows, positioning, fit,backgrounds}

\newtheorem{thm}{Theorem}
\newtheorem{cor}{Corollary}
\newtheorem{lem}{Lemma}

\numberwithin{equation}{section}

\usepackage{hyperref}

\journal{Advances in Applied Mathematics}









\bibliographystyle{elsarticle-num}

\begin{document}
\allowdisplaybreaks
\begin{frontmatter}

\title{A semi-bijective algorithm for saturated extended 2-regular simple stacks}

\author[mymainaddress]{Qianghui Guo\corref{mycorrespondingauthor}}
\cortext[mycorrespondingauthor]{Corresponding author}
\ead{guo@nankai.edu.cn}

\author[mymainaddress]{Yinglie Jin\corref{mycorrespondingauthor}}
\ead{yljin@nankai.edu.cn}

\author[mysecondaryaddress]{Lisa H. Sun\corref{mycorrespondingauthor}}
\ead{sunhui@nankai.edu.cn}

\author[mymainaddress,mythirdaddress]{Mingxing Weng}

\address[mymainaddress]{School of Mathematical Sciences, The Key Laboratory of Pure Mathematics and Combinatorics of Ministry of Education of China (LPMC), Nankai University, Tianjin 300071, P.R. China.}
\address[mysecondaryaddress]{Center for Combinatorics, The Key Laboratory of Pure Mathematics and Combinatorics of Ministry of Education of China (LPMC), Nankai University, Tianjin 300071, P.R. China.}
\address[mythirdaddress]{Department of Mathematics, Shanghai Jiao Tong University, Shanghai 200240,  P.R. China.}

\begin{abstract}
Combinatorics of biopolymer structures, especially enumeration of various RNA secondary structures and protein contact maps, is of significant interest for communities of both combinatorics and computational biology. 
However, most of the previous combinatorial enumeration results for these structures are presented in terms of generating functions, and few are explicit formulas. 
This paper is mainly concerned with finding explicit enumeration formulas for a particular class of biologically relevant structures, say,  saturated 2-regular simple stacks, whose configuration is related to protein folds in the 2D honeycomb lattice. We establish a semi-bijective algorithm that converts saturated 2-regular simple stacks into forests of small trees, which produces a uniform formula for saturated extended 2-regular simple stacks with any of the six primary component types. Summarizing the six different primary component types, we obtain a bivariate explicit formula for saturated extended 2-regular simple stacks with $n$ vertices and $k$ arcs. As consequences, the uniform formula can be reduced to Clote's results on $k$-saturated 2-regular simple stacks and the optimal 2-regular simple stacks, and Guo et al.'s result on the optimal extended 2-regular simple stacks.
\end{abstract}

\begin{keyword}
bijective algorithm\sep small tree\sep saturated stack\sep RNA secondary structure\sep protein contact map
\end{keyword}

\end{frontmatter}

\section{Introduction}

The diagram $G([n],E)$, a graph represented by drawing $n$ vertices in a horizontal line and arcs $(i,j)\in E$ in the upper halfplane, is a classical combinatorial structure closely related to set partitions and lattice paths \cite{chen-crossings-2005,chen-linked-2008,stein-class-1978I}. It attracts extensive studies by various motivations, one of which is from computational molecular biology, where the diagram is often used to model biopolymer structures like RNA secondary structures and protein contact maps.

Since Waterman set up a combinatorial framework for the study of RNA secondary structures in the 1970s \cite{waterman-secondary-1978,waterman-rna-1978}, combinatorial problems related to computational molecular biology, especially the combinatorial enumeration of various RNA secondary structures has attracted significant interest from both combinatorialists and theoretical biologists. 
For example, Waterman and his coworkers further obtained recurrence relations, explicit and asymptotic formulas for the number of several types of RNA secondary structures \cite{howell-computation-1980,schmitt-linear-1994,stein-some-1979,waterman-combinatorics-1979}.
Nebel and his coworkers provided enumerative results on statistical properties for (extended) RNA secondary structures using dot-bracket words and context-free {grammar methods} \cite{muller-combinatorics-2015,nebel-combinatorial-2002,nebel-quantitative-2009}.
Reidys et al. systematically studied RNA secondary structures with pseudoknots, and compiled their works in a monograph \cite{reidys-combinatorial-2010}. 
Clote and his coworkers proposed the concept of saturated RNA secondary structures and studied its enumeration problems  \cite{clote-combinatorics-2006,clote-asymptotics-2009,clote-asymptotic-2007,fusy-combinatorics-2014,waldispuhl-computing-2007}.

Recently, the combinatorial framework for protein contact map has also been initiated. When two amino acids in a protein fold come close enough to each other, they presumably form some kind of bond, which is called a contact.
The contact map of a protein fold is a graph that represents the patterns of contacts in the fold.
In combinatorics, the contact map is usually represented by arranging its amino acids on a horizontal line and drawing an arc between two residues if they form a contact. Contacts play a fundamental role in the study of protein structure and folding problems. 
Goldman et al. \cite{goldman-algorithmic-1999} showed that for any protein fold in 2D square lattice, the contact map can be decomposed into (at most) two stacks and one queue, which can be seen as generalizations of RNA secondary structures without and with pseudoknots, respectively. Istrail and Lam \cite{istrail-combinatorial-2009} proposed the question concerning generalizations of the Schmitt-Waterman counting formulas for RNA secondary structures \cite{schmitt-linear-1994} to enumerating stacks and queues, and they pointed out that this could provide insights into computing rigorous approximations of the partition function of protein folding in HP models. Thereafter, a series of works attacking the enumeration of stacks and queues were made by Chen, Guo and their co-authors \cite{chen-zigzag-2014,guo-combinatorics-2018,guo-enumeration-2016, guo-regular-2017}.

However, most of the above enumeration results are in the form of generating function, or generating function equation(s), or asymptotic formulas, few are explicit formulas.
This paper makes efforts to find general explicit enumeration formulas for a particular class of diagrams, say,  saturated extended 2-regular simple stacks, which emerges from the contact map of protein folds in the 2D honeycomb lattice.

It is known that in the classic hydrophobic-polar (HP) protein folding model \cite{dill-theory-1985}, the fold of a protein sequence is modeled as a self-avoiding walk in the 2D or 3D lattice. In different lattice models, the maximum vertex degree and {the minimum} arc length of the contact map can vary significantly. For instance, in a protein contact map in 2D square lattice, the degree of each internal vertex and terminal vertex is at most 2 and 3, respectively, and {the minimum arc length} is at least 3. While in the 2D honeycomb lattice, the degree of each internal vertex and terminal vertex is at most 1 and 2, respectively, and {the minimum arc length} is at least 5. Figure \ref{fig:honeycomb-lattice} \cite{guo-number-2022} shows the contact map of a protein fold in the 2D honeycomb lattice. For an investigation of various lattice models used for protein folding, we refer to \cite{pierri-lattices-2008}. 

\begin{figure}[H]
	\centering
	\begin{tikzpicture}
			\foreach \s in {0.45} {
				\draw[dash pattern=on 1pt off 1pt,thin] (0,2*\s) -- +(-150:\s) (2*\s*sin{60},5*\s) -- +(90:\s) (4*\s*sin{60},5*\s) -- +(90:\s) (6*\s*sin{60},2*\s) -- +(-30:\s);
				\draw[dash pattern=on 1pt off 1pt,thin] (0,-1*\s) -- ++(-30:\s) -- ++(30:\s) -- ++(-30:\s) -- ++(30:\s) -- ++(-30:\s) -- ++(30:\s);
				\draw[dash pattern=on 1pt off 1pt,thin] (-sin{60}*\s,cos{60}*\s) -- ++(90:\s) -- ++(150:\s) -- ++(90:\s) -- ++(30:\s) -- ++(90:\s) -- ++(30:\s) -- ++(90:\s) -- ++(30:\s) -- ++(-30:\s) -- ++(30:\s) -- ++(-30:\s) -- ++(30:\s) -- ++(-30:\s) -- ++(-90:\s) -- ++(-30:\s) -- ++(-90:\s) -- ++(-30:\s) -- ++(-90:\s) -- ++(-150:\s) -- ++(-90:\s);
				\foreach \i in {0,1,2,3}
				\foreach \j in {0,1} {\foreach \a in {30,150,-90} \draw[dash pattern=on 1pt off 1pt,thin] (2*sin{60}*\i*\s,3*\j*\s) -- +(\a:1*\s);}
				\foreach \i in {0,1,2}
				\foreach \j in {0,1} {\foreach \a in {30,150,-90} \draw[dash pattern=on 1pt off 1pt,thin] (2*\i*sin{60}*\s+sin{60}*\s,3*cos{60}*\s+3*\j*\s) -- +(\a:1*\s);}
				
				\draw[thick] (0,2*\s) -- ++(-30:\s) -- ++(-90:\s) -- ++(-30:\s) -- ++(-90:\s) -- ++(-30:\s) -- ++(30:\s) -- ++(90:\s) -- ++(150:\s) -- ++(90:\s) -- ++(150:\s)
				(0,2*\s) -- ++(90:\s) -- ++(30:\s) -- ++(-30:\s) -- ++(30:\s) -- ++(-30:\s) -- ++(-90:\s) -- ++(-30:\s) -- ++(30:\s) -- ++(90:\s) -- ++(150:\s) -- ++(90:\s) -- ++(150:\s) -- ++(-150:\s);
				
				\filldraw[thick] (0,2*\s) circle[radius=1pt] node[anchor=east]{\tiny 11} 
				++(-30:\s) circle[radius=1pt] node[anchor=east]{\tiny 10} 
				++(-90:\s) circle[radius=1pt] node[anchor=east]{\tiny 9} 
				++(-30:\s) circle[radius=1pt] node[anchor=east]{\tiny 8} 
				++(-90:\s) circle[radius=1pt] node[anchor=east]{\tiny 7} 
				++(-30:\s) circle[radius=1pt] node[anchor=south]{\tiny 6} 
				++(30:\s) circle[radius=1pt] node[anchor=west]{\tiny 5} 
				++(90:\s) circle[radius=1pt] node[anchor=west]{\tiny 4} 
				++(150:\s) circle[radius=1pt] node[anchor=west]{\tiny 3} 
				++(90:\s) circle[radius=1pt] node[anchor=west]{\tiny 2} 
				++(150:\s) circle[radius=1pt] node[anchor=west]{\tiny 1} 
				(0,2*\s) ++(90:\s) circle[radius=1pt] node[anchor=east]{\tiny 12} 
				++(30:\s) circle[radius=1pt] node[anchor=east]{\tiny 13} 
				++(-30:\s) circle[radius=1pt] node[anchor=south]{\tiny 14} 
				++(30:\s) circle[radius=1pt] node[anchor=south west]{\tiny 15} 
				++(-30:\s) circle[radius=1pt] node[anchor=south]{\tiny 16} 
				++(-90:\s) circle[radius=1pt] node[anchor=west]{\tiny 17} 
				++(-30:\s) circle[radius=1pt] node[anchor=west]{\tiny 18} 
				++(30:\s) circle[radius=1pt] node[anchor=west]{\tiny 19} 
				++(90:\s) circle[radius=1pt] node[anchor=west]{\tiny 20} 
				++(150:\s) circle[radius=1pt] node[anchor=west]{\tiny 21} 
				++(90:\s) circle[radius=1pt] node[anchor=west]{\tiny 22} 
				++(150:\s) circle[radius=1pt] node[anchor=west]{\tiny 23} 
				++(-150:\s) circle[radius=1pt] node[anchor=south]{\tiny 24};
			}			
	\end{tikzpicture}

	\begin{tikzpicture}
			\foreach \s in {0.45} 
			{\foreach \x in {1,...,24}
				\filldraw (\x *\s,0) circle[radius=1pt];
				\foreach \x in {1,...,24}
				\draw (\x*\s,-0.2) node{\footnotesize \x};
								
				\draw[dash pattern=on 1pt off 1pt]
				(1*\s,0) to [bend left=50] node[minimum size=5pt,inner sep=2pt,above, near end]{\footnotesize $S_1$} (10*\s,0)
				(3*\s,0) to [bend left=60] (8*\s,0);
				
				\draw[densely dash dot] 
				(1*\s,0) to [bend left=50] (14*\s,0)
				(2*\s,0) to [bend left=50] node[near start,above]{\footnotesize $Q$} (17*\s,0);
				
				\draw
				(15*\s,0) to [bend left=50] node[above]{\footnotesize $S_2$} (24*\s,0)
				(16*\s,0) to [bend left=50] (21*\s,0);}
	\end{tikzpicture}
	\caption{The contact map of a protein fold in the 2D honeycomb lattice. It can be decomposed into two stacks and one queue which are labelled by $S_1, S_2$, and $Q$, respectively.}\label{fig:honeycomb-lattice}
\end{figure}
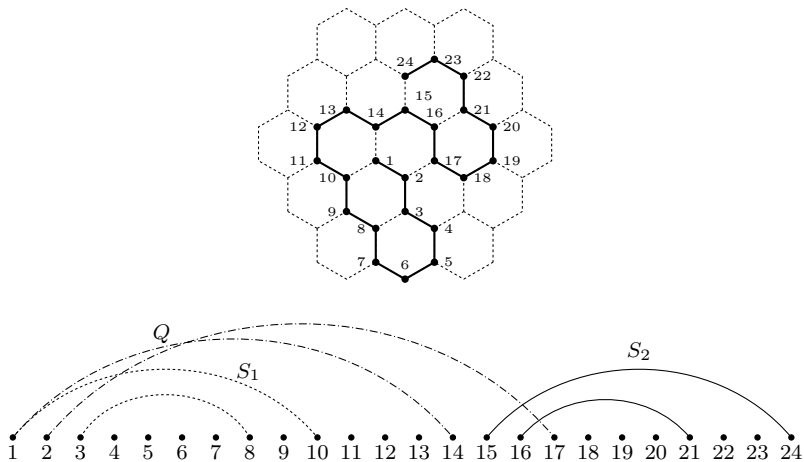

In a diagram, we say two arcs $(i,j)$ and $(k,l)$ form a nesting, if $i<k<l<j$, and a crossing if $i<k<j<l$. A noncrossing diagram is called a \textit{stack}, and a nonnesting diagram is called a \textit{queue}. Following \cite{chen-zigzag-2014, guo-regular-2017}, a structure (stack or queue) with arc length  at least $m$ is called \textit{$m$-regular}; a structure with the degree of each vertex bounded by one and two are called \textit{simple} and \textit{linear}, respectively. Actually, an RNA secondary structure can be viewed as a $2$-regular simple stack. Furthermore, 
{an \textit{extended $m$-regular simple stack} is an $m$-regular simple stack, except that the
two terminal vertices have a degree bounded by 2 instead of 1.}

The free energy minimization model plays an important role in the design of lots of structure prediction algorithms for RNA and protein \cite{waterman-rna-1978, zuker-comparison-1991}. In the classic Nussinov-Jacobson energy model \cite{nussinov-fast-1980}, the energy function is the negative of the number of base pairs (for RNA) or contacts (for protein) and the structures with minimum energy are called optimal. 
The number of optimal 2-regular simple stacks of length $n$, denoted by $LO_0(n)$, is given by Clote \cite[Corollary 13]{clote-combinatorics-2006} as follows.
\begin{equation}\label{equ:LO-0}
	LO_0(n)=\begin{cases}
		1,&\textup{if}\ n\ \textup{is odd},\\
		n(n+2)/8, &\textup{if}\ n\ \textup{is even},
	\end{cases}
\end{equation}
where $n\geq 1$, and $LO_0(0)=1.$
Guo et al. \cite[Theorem 2]{guo-number-2022} obtained the explicit expression for the number of the  optimal extended 2-regular simple stacks with $n$ vertices, denoted by $ELO_0(n)$, as follows.
\begin{equation}\label{equ:ELO-0}
	ELO_0(n)=\begin{cases}
		n-3, &\textup{if}\ n\ \textup{is even},\\
		(n^3-3n^2-7n+69)/12,&\textup{if}\ n\ \textup{is odd},
	\end{cases}
\end{equation}
where $n\geq 5$.

The {\textit{saturated}} structure, introduced by Zuker \cite{zuker-rna-1986}, is formally defined as the structure in which no arcs can be added without violating the constraints like arc length, vertex degree, and noncrossing. With respect to the Nussinov-Jacobson energy model, saturated secondary structures are actually local minima in the energy landscape. The combinatorial problem related to the number of saturated RNA secondary structures has been studied extensively \cite{clote-combinatorics-2006,clote-asymptotics-2009,clote-asymptotic-2007,fusy-combinatorics-2014,waldispuhl-computing-2007}. Following Zuker \cite{zuker-rna-1986}, Clote \cite{clote-combinatorics-2006} introduced the concept of $k$-saturated structure which is saturated and contains exactly $k$ fewer arcs than the optimal structures, and obtained recurrence relations for the number of $k$-saturated 2-regular simple stacks. Particularly, 0-saturated structures are just optimal structures.

In this paper, to find explicit formulas for saturated extended 2-regular simple stacks, we establish a semi-bijective algorithm that maps saturated extended 2-regular simple stacks to small forests. This algorithm is a composition of Schmitt and Waterman's bijection \cite{schmitt-linear-1994} between RNA secondary structures and linear trees, and a bijection between unlabelled linear trees and forests of small trees. The latter bijection can be seen as a variation of the bijection for Schr\"oder trees due to Chen \cite{chen-general-1990}. 
For saturated extended 2-regular simple stacks, we distinguish six types of primary components. 
By counting the resulting forests, we obtain a uniform formula for saturated extended 2-regular simple stacks with any of the six primary component types. 
As consequences, the uniform formula can be reduced to Clote's \cite{clote-combinatorics-2006} results on $k$-saturated 2-regular simple stacks and the optimal 2-regular simple stacks. 
By using this uniform formula for each primary component type, we obtain the main result of this paper, an explicit formula for the enumeration of saturated extended 2-regular simple stacks refined by the number of arcs.

\begin{thm}\label{thm:ELO(n,k)}
	Let $ELO(n,k)$ denote the total number of saturated extended 2-regular simple stacks with $n$ vertices and $k$ arcs. For any $n\geq 6,  k\geq 3$, we have
	\begin{equation}\label{equ:ELOnk}
		ELO(n,k)=\sum_{t=1}^{k+1}\sum_{i=1}^4\binom{k+1}{t}\binom{2t+k}{t-1}\binom{t}{n-2k-2-t+i}P_i(k,t),
	\end{equation}
	where 
	\begin{align*}
		P_1(k,t) = & \frac{2(k-t+1)}{(k+1)_3(k+2t)_2}\Big(k^{3}+(4 t-2) k^{2}+(2 t^{2}-4 t+1) k-2 t^{3}-4 t^{2}+4 t\Big),\\[5pt]
		P_2(k,t) = & \frac{k-t+1}{(k+1)_4(k+2t)_2}\Big(7 k^{4}+\left(22 t-22\right) k^{3}+\left(-3 t^{2}-49 t+17\right) k^{2}-(22 t^{3}\\
		& +13 t^{2}-37 t+2) k+6 t^{4}+44 t^{3}-4 t^{2}-18t\Big),\\[5pt]
		P_3(k,t) = & \frac{2(k-t+1)_2}{(k+1)_4(k+2t)_3}\Big(k^{4}+(7t-6)k^{3}+(15t^{2}-32t+13)k^{2}+(7t^{3}-44t^{2}\\
		& +45t-12)k-4t^{4}-18t^{3}+48t^{2}-30t+4\Big),\\[5pt]
		P_4(k,t) = & \frac{(t-1)(k+t+1)}{(k+1)_4(k+2t)_5}\Big(4k^6 + (16t - 42)k^5 + (8t^2 - 106t + 160)k^4 - (24t^3\\
		&- 26t^2  - 204t + 270)k^3 + (-10t^4 + 132t^3 - 244t^2 - 50t + 196)k^2\\
		& + (13t^5 - 8t^4- 217t^3+ 468t^2 - 208t - 48)k - 2t^6 - 20t^5 + 66t^4\\
		&  + 68t^3 - 304t^2 + 192t\Big),
	\end{align*}
	and $(n)_k=n(n-1)\cdots(n-k+1)$ denotes the $k$th falling factorial.
\end{thm}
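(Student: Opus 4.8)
The plan is to derive (\ref{equ:ELOnk}) from the uniform counting formula established above, by evaluating it on each of the six primary component types and then consolidating the six contributions into the four weights $P_1,\dots,P_4$. Throughout I rely on the semi-bijective correspondence, which sends a saturated extended 2-regular simple stack with $k$ arcs to a forest of small trees whose global shape is controlled by a parameter $t$, the number of small trees in the associated forest, with $t$ ranging over $1\le t\le k+1$.

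First I would isolate the factors common to all six types. The distribution of the $k$ arcs among the $t$ small trees, together with the admissible orderings of the resulting skeleton, is counted by the product $\binom{k+1}{t}\binom{2t+k}{t-1}$; crucially this count depends only on $(k,t)$ and not on how the two terminal vertices attach, so it factors out of the type sum and appears outside the inner index $i$ in (\ref{equ:ELOnk}). The dependence on the number of vertices $n$ enters through a single free binomial: for a fixed skeleton, the unpaired vertices that saturation permits to be inserted are counted by $\binom{t}{j}$, where $j$ is tied to $n$ by an affine relation $n=2k+2+t-i+j$ dictated by the terminal configuration. This is exactly the binomial $\binom{t}{\,n-2k-2-t+i\,}$, and the integer offset $i$ records the fixed vertex contribution of the terminal pattern.

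Second I would run the short case analysis over the six primary component types. Each type prescribes the degrees (one or two) of the two terminal vertices and the local saturation pattern around them; this simultaneously fixes its offset $i$ and its rational weight, the latter being a quotient of falling factorials $(k+1)_a(k+2t)_b$ whose exponents grow with the structural depth of the terminal pattern. The key combinatorial observation is that the six types collapse onto only four distinct offsets $i\in\{1,2,3,4\}$, with two offsets receiving a single type and two offsets receiving a pair of types. Consequently $P_i(k,t)$ is defined as the sum, over all types with offset $i$, of their per-type weights, reduced to a common denominator.

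The main obstacle is this final consolidation. Summing the (at most two) rational weights within each offset class over a common denominator of the form $(k+1)_a(k+2t)_b$, and proving that the numerators expand to precisely the polynomials in the statement, is an exercise in careful falling-factorial arithmetic in which an error in pairing the types or in the denominator exponents would corrupt the entire formula. I would first pin down the denominator exponents of each $P_i$ directly from the structural depth of its contributing types, matching $(k+1)_3(k+2t)_2$, $(k+1)_4(k+2t)_2$, $(k+1)_4(k+2t)_3$, and $(k+1)_4(k+2t)_5$ for $i=1,2,3,4$, and then verify the numerators by degree and leading-coefficient counts in $k$ and $t$. As an independent check I would specialize the formula: collapsing the extended terminal patterns should recover Clote's count $LO_0(n)$ in (\ref{equ:LO-0}) and his $k$-saturated numbers, while restricting to optimal (arc-maximal) structures should reproduce Guo et al.'s $ELO_0(n)$ in (\ref{equ:ELO-0}); agreement with all three known formulas provides strong confirmation that the six-type sum has been assembled correctly.
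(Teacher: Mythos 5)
Your overall strategy coincides with the paper's: apply the uniform formula of Theorem \ref{thm:P-V^A} to each of the six primary component types of Table \ref{tab:extended saturated structures} and consolidate. However, two of your structural claims are wrong, and they are exactly the points where the real work lies. First, the factor $\binom{k+1}{t}\binom{2t+k}{t-1}$ is \emph{not} common to the six types for any combinatorial reason: the per-type evaluations of Theorem \ref{thm:P-V^A} produce \emph{different} binomials for different types (e.g.\ $\binom{k+i-2}{t-1}\binom{2t+k-2}{t-i-j-1}$ for $\mathcal{A}_1$, $\binom{k-1}{t}\binom{2t+k-1}{t-1}$ for $\mathcal{A}_2$, $\binom{2t+k-4}{t-2-i}$ for $\mathcal{A}_5$, $\binom{2t+k-5}{t-2-i}$ for $\mathcal{A}_6$), and the common factor is only \emph{manufactured} afterwards by writing each of these as $\binom{k+1}{t}\binom{2t+k}{t-1}$ times a ratio of falling factorials; that forced rewriting is precisely the origin of the denominators $(k+1)_a(k+2t)_b$ in $P_1,\dots,P_4$. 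Your argument that the factor ``depends only on $(k,t)$ and not on how the two terminal vertices attach, so it factors out'' would not survive contact with the actual per-type formulas. Second, your accounting of how the six types distribute over the offsets $i\in\{1,2,3,4\}$ (``two offsets receiving a single type and two offsets receiving a pair of types'') is incorrect: with $h=n-2k-t$, the type $\mathcal{A}_1$ contributes to $i=1$ and $i=2$, types $\mathcal{A}_2$ and $\mathcal{A}_3$ each contribute to $i=2$ and $i=3$, $\mathcal{A}_4$ to $i=2$, and $\mathcal{A}_5,\mathcal{A}_6$ to $i=4$; so $i=2$ collects contributions from four types while $i=1$ collects from one. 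A consolidation based on your pairing would produce wrong polynomials $P_i$.

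The reason several types contribute to \emph{two} adjacent offsets is a mechanism your proposal omits entirely: the intervals of type $T_6$ and $T_7$ may or may not end with an extra isolated vertex, and the paper handles this by evaluating the uniform formula at both $n$ and $n-1$, e.g.\ $s_1(n,k)=P(n,k;2,0,1,1,1,0)+P(n-1,k;2,0,1,1,1,0)$, which is what shifts $\binom{t}{h}$ to $\binom{t}{h-1}$ and creates the second offset. You also need the symmetry factor giving $ELO(n,k)=2(s_1+s_2+s_3)+s_4+s_5+s_6$ for the reflected types $\mathcal{A}_1',\mathcal{A}_2',\mathcal{A}_3'$, and the correct parameter tuples $(k_1,d,I_1,I_2,J_1,J_2)$ for each type (e.g.\ $\mathcal{A}_3$ requires both $P(n,k;3,1,1,1,1,1)$ and $P(n-1,k;3,1,1,1,2,0)$, with different $J_1,J_2$ in the two subcases). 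Without these ingredients the case analysis cannot be carried out, so the proposal as written is a correct outline of the paper's route but has genuine gaps at the steps that determine the final coefficients. Your proposed sanity checks against $LO_0(n)$ and $ELO_0(n)$ are sensible and match the paper's own remarks.
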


It is worthwhile noting that Equation \eqref{equ:ELOnk} reduces to Guo et al.'s result \cite[Theorem 2]{guo-number-2022} for the number of optimal extended 2-regular simple stacks when taking $k=\lfloor \frac{n}{2}\rfloor$.

This paper is organized as follows. In Section \ref{sec:combinatorial-algorithm}, we present the semi-bijective algorithm as well as some basic definitions and notations. In Section \ref{sec:smrss}, we study the enumeration of saturated $m$-regular simple stacks. In Section \ref{sec:stack-given-arc-pattern}, we give a uniform explicit formula for enumerating saturated extended 2-regular simple stacks with six primary component types. At last, Section \ref{sec:saturated-extended-stacks} devotes to proving Theorem \ref{thm:ELO(n,k)} by using the uniform formula.

\section{The semi-bijective algorithm}\label{sec:combinatorial-algorithm}

In this section, we propose a semi-bijective algorithm that generates forests of small trees from 2-regular simple stacks. We first introduce some basic definitions and notations. 

A tree with a fixed root is called a \textit{rooted tree}. A \textit{linear tree} is a rooted tree together with a linear ordering on the set of children of each vertex in the tree. A linear tree of height one is called a \textit{small tree}, and a forest of small trees is called a \textit{small forest}, also known as \textit{meadow} in graph theory. 
In a linear tree, the \textit{fiber} of a vertex is the list of its children, a vertex with empty fiber is called a leaf, and all the other vertices are called internal. 
Obviously, the fiber of the root of a small tree is the list of its leaves. 
An internal vertex whose children are all leaves is called \textit{outmost internal}. A \textit{labelled tree} on $[n]$ is a tree in which the labels of all nodes is exactly $[n]$ with no repetition, where $[n]=\{1,2,\ldots,n\}$. A linear tree on $n$ vertices with each vertex labelled by a distinct number in $[n]$ is called a {\textit{labelled linear tree}}. See Figure \ref{fig:linear tree} for an example.
\begin{figure}[H]
	\centering
	\tikzstyle{vertex} = [circle, fill, minimum width=3pt, inner sep=0pt]
	\begin{tikzpicture}[grow=down,label distance=-2pt]
		\tikzstyle{level 1} = [level distance=8mm, sibling distance=15mm]
		\tikzstyle{level 2} = [level distance=8mm, sibling distance=7mm]
		\tikzstyle{level 3} = [level distance=8mm, sibling distance=7mm]
		\tikzstyle{level 4} = [level distance=8mm, sibling distance=5mm]
		\node[vertex,label=90:{\footnotesize 1}]{}
		child {
			node[vertex,label=180:{\footnotesize 2}]{}
			child {
				node[vertex,label=180:{\footnotesize 3}]{}
				child {
					node[vertex,label=180:{\footnotesize 8}]{}
					child {
						node[vertex,label=-90:{\footnotesize 12}]{}
						edge from parent
					}
					child {
						node[vertex,label=-90:{\footnotesize 4}]{}
						edge from parent
					}
					edge from parent
				}
				child {
					node[vertex,label=180:{\footnotesize 7}]{}
					child {
						node[vertex,label=-90:{\footnotesize 13}]{}
						edge from parent
					}
					edge from parent
				}
				child {
					node[vertex,label=-90:{\footnotesize 15}]{}
					edge from parent
				}
				edge from parent
			}
			child {
				node[vertex,label=-90:{\footnotesize 10}]{}
				edge from parent
			}
		}
		child {
			node[vertex,label=0:{\footnotesize 9}]{}
			child {
				node[vertex,label=-90:{\footnotesize 5}]{}
				edge from parent
			}
			child {
				node[vertex,label=-90:{\footnotesize 11}]{}
				edge from parent
			}
			child {
				node[vertex,label=0:{\footnotesize 16}]{}
				child {
					node[vertex,label=-90:{\footnotesize 6}]{}
					edge from parent
				}
				child {
					node[vertex,label=-90:{\footnotesize 14}]{}
					edge from parent
				}
				edge from parent
			}
			edge from parent
		};
	\end{tikzpicture}
	\caption{A labelled linear tree on $[16]$ with internal vertices $1,2,3,7,8,9,16$ and  outmost internal vertices $7,8,16$.}\label{fig:linear tree}
\end{figure}
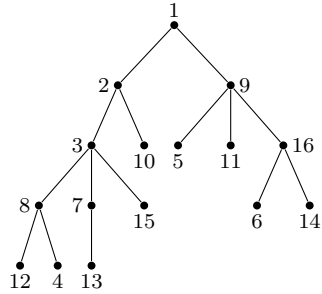

Let interval $\langle i,j\rangle$ denote the set $\{i+1,i+2,\ldots,j-1\}$, which may be empty. Let $[m,n]$ denote the set $\{m,m+1,\ldots,n\}$. Denote $\mathcal{R}(n,k)$ the set of 2-regular simple stacks on $[n]$ with $k$ arcs, and $\mathcal{T}(n,k)$  the set of unlabelled linear trees with $n$ vertices, in which $k$ vertices are internal. 

Recall the following bijection given by Schmitt and Waterman \cite{schmitt-linear-1994},
\begin{align}
    \varphi: \mathcal{R}(n,k)&\rightarrow \mathcal{T}(n-k+1,k+1),\label{eq:bijection phi}\\
    \notag S &\mapsto T,
\end{align}
which is defined as follows.  

Denote the set of isolated vertices of $S$ by $I$. Let $V$ be the set $\{[i,j]:(i,j)\in S\}\cup \{[0,n+1]\}\cup I$. Partially order $V$ by set inclusion and then the Hasse diagram of $V$ is a rooted tree having $n-k+1$ vertices in which $k+1$ vertices are internal. The linear order of the set $I$ of terminal vertices gives this tree a linear structure. By removing all the labels of this linear tree, we obtain $T\in \mathcal{T}(n-k+1,k+1)$.

Figure \ref{fig:saturated RNA and linear tree} illustrates the bijection $\varphi$.  
\begin{figure}[H]
	\centering
	\subfigure[A saturated 2-regular simple stack $S$]{\adjustbox{valign=c}{\begin{tikzpicture}
				\foreach \s in {.4} {
				\clip (0.9*\s,-1.5*\s) rectangle (21.3*\s,3.5*\s);
				\foreach \x in {1,...,21}
				\filldraw (\x *\s,0) circle[radius=1pt];
				\foreach \x in {1,...,21}
				\draw (\x*\s,-0.2) node{\footnotesize \x};
				\foreach \x/\y in {1/13,2/11,3/6,7/9,14/21,17/20}
				\draw (\x*\s,0) to [bend left=60] (\y*\s,0);}
	\end{tikzpicture}}}
	
	\subfigure[Hasse diagram of $V$]{\adjustbox{valign=c}{\begin{tikzpicture}[grow=down,label distance=-2pt]
				\clip (-2.6,-3.8) rectangle (2.4,0.4);
				\tikzstyle{vertex} = [circle, fill, minimum width=3pt, inner sep=0pt]
				\tikzstyle{level 1} = [level distance=8mm, sibling distance=15mm]
				\tikzstyle{level 2} = [level distance=8mm, sibling distance=7mm]
				\tikzstyle{level 3} = [level distance=8mm, sibling distance=8mm]
				\tikzstyle{level 4} = [level distance=8mm, sibling distance=5mm]
				\node[vertex,label=90:{\footnotesize [0,22]}]{}
				child {
					node[vertex,label=180:{\footnotesize [1,13]}]{}
					child {
						node[vertex,label=180:{\footnotesize [2,11]}]{}
						child {
							node[vertex,label=180:{\footnotesize [3,6]}]{}
							child {
								node[vertex,label=-90:{\footnotesize 4}]{}
								edge from parent
							}
							child {
								node[vertex,label=-90:{\footnotesize 5}]{}
								edge from parent
							}
							edge from parent
						}
						child {
							node[vertex,label=180:{\footnotesize [7,9]}]{}
							child {
								node[vertex,label=-90:{\footnotesize 8}]{}
								edge from parent
							}
							edge from parent
						}
						child {
							node[vertex,label=-90:{\footnotesize 10}]{}
							edge from parent
						}
						edge from parent
					}
					child {
						node[vertex,label=-90:{\footnotesize 12}]{}
						edge from parent
					}
				}
				child {
					node[vertex,label=0:{\footnotesize [14,21]}]{}
					child {
						node[vertex,label=-90:{\footnotesize 15}]{}
						edge from parent
					}
					child {
						node[vertex,label=-90:{\footnotesize 16}]{}
						edge from parent
					}
					child {
						node[vertex,label=0:{\footnotesize [17,20]}]{}
						child {
							node[vertex,label=-90:{\footnotesize 18}]{}
							edge from parent
						}
						child {
							node[vertex,label=-90:{\footnotesize 19}]{}
							edge from parent
						}
						edge from parent
					}
					edge from parent
				};
	\end{tikzpicture}}}
	\hspace{.5cm}
	\subfigure[Unlabelled linear tree $\varphi(S)$]{\adjustbox{valign=c}{\begin{tikzpicture}[grow=down]
	            \clip (-2.2,-3.8) rectangle (2,0.4);
				\tikzstyle{vertex} = [circle, fill, minimum width=3pt, inner sep=0pt]
				\tikzstyle{level 1} = [level distance=8mm, sibling distance=15mm]
				\tikzstyle{level 2} = [level distance=8mm, sibling distance=7mm]
				\tikzstyle{level 3} = [level distance=8mm, sibling distance=7mm]
				\tikzstyle{level 4} = [level distance=8mm, sibling distance=5mm]
				\node[vertex]{}
				child {
					node[vertex]{}
					child {
						node[vertex]{}
						child {
							node[vertex]{}
							child {
								node[vertex]{}
								edge from parent
							}
							child {
								node[vertex]{}
								edge from parent
							}
							edge from parent
						}
						child {
							node[vertex]{}
							child {
								node[vertex]{}
								edge from parent
							}
							edge from parent
						}
						child {
							node[vertex]{}
							edge from parent
						}
						edge from parent
					}
					child {
						node[vertex]{}
						edge from parent
					}
				}
				child {
					node[vertex]{}
					child {
						node[vertex]{}
						edge from parent
					}
					child {
						node[vertex]{}
						edge from parent
					}
					child {
						node[vertex]{}
						child {
							node[vertex]{}
							edge from parent
						}
						child {
							node[vertex]{}
							edge from parent
						}
						edge from parent
					}
					edge from parent
				};
	\end{tikzpicture}}}
	\caption{A saturated 2-regular simple stack in $\mathcal{R}(21,6)$ and its corresponding unlabelled linear tree in $\mathcal{T}(16,7)$.} \label{fig:saturated RNA and linear tree}
\end{figure}
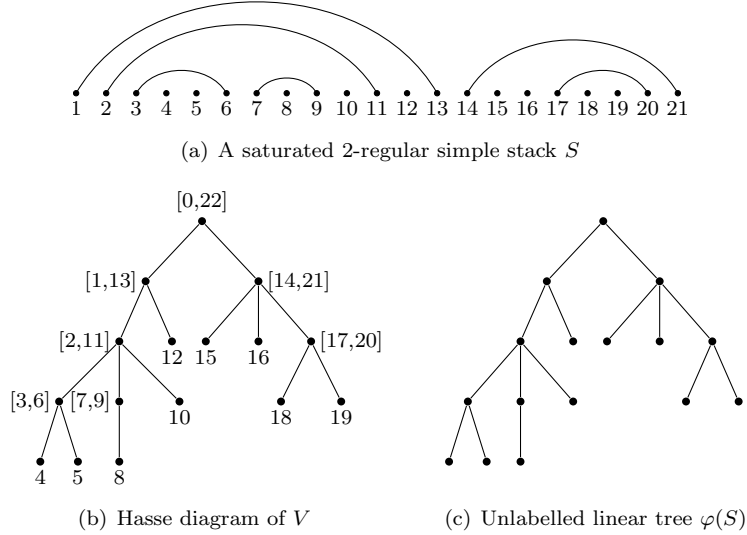

Based on the bijection $\varphi$ and the enumeration results on unlabelled linear trees \cite{chen-general-1990}, Schmitt and Waterman \cite{schmitt-linear-1994} derived the number of 2-regular simple stacks on $[n]$ with $k$ arcs
\begin{equation}
    |\mathcal{R}(n,k)|=\frac{1}{k}\binom{n-k}{k+1}\binom{n-k-1}{k-1},\qquad n, k\geq 1.
\end{equation}

Next, we give a bijective algorithm that constructs small forests from labelled linear trees. The idea of our algorithm originates from Chen's decomposition algorithm for Schr\"oder trees \cite{chen-general-1990}, which has been used to obtain many classical results for enumerations of trees.

Denote $\mathcal{LT}(n,k)$ the set of labelled linear trees on $n$ vertices with $k$ internal vertices. Let $\mathcal{F}(n,k)$ be the set of forests on $[n]$ with $k$ small trees such that all the roots are assigned labels less than or equal to $n-k+1$, and all the vertices with labels greater than $n-k+1$ are asterisked. {In other words, the last $k-1$ labels of $[n]$ are asterisked and cannot be roots.} Obviously, roots must be unasterisked. {Note that the set of labels of the nodes in $\mathcal{LT}(n,k)$ is exactly $[n]$, and that the set of all labels of the small trees in $\mathcal{F}(n,k)$ is also exactly $[n]$ with no repetition in the labels among the trees in the forest.}

We define the bijection between $\mathcal{LT}(n,k)$ and {$\mathcal{F}(n+k-1,k)$} 
\begin{align}
    \psi: \mathcal{LT}(n,k) &\rightarrow \mathcal{F}(n+k-1,k)\\
    \notag LT &\mapsto F,
\end{align}
as follows: for a given $LT\in\mathcal{LT}(n,k)$, 
\begin{enumerate} 
	\item Initialize $F=\varnothing$;
	\item Suppose the label of the largest outmost internal vertex of $LT$ is $i$, and denote its fiber by $B$. Then add a small tree with root $i$ and fiber $B$ into $F$;
	\item In $LT$, remove the fiber $B$ and relabel the vertex $i$ by $(n+1)^*$;
	\item Repeat step 2 and step 3, and relabel the largest outmost internal vertex $i$ in each time by $(n+2)^*, \ldots, (n+k-1)^*$ subsequently, until all the internal vertices have been asterisked except the root. Then add the small tree   with the root  of $LT$ into $F$.
\end{enumerate}
Given $F\in{\mathcal{F}(n+k-1,k)}$, the inverse map can be done as follows.
\begin{enumerate} 
	\item In $F$, among the trees with no asterisked vertex, select the one whose root label is the largest, denote that tree by $T$;
	\item Find the tree $T^*$ in $F$ that contains $(n+1)^*$, then update $F$ by replacing $(n+1)^*$ with $T$ in $T^*$;
	\item Repeat step 1 and step 2 for vertices $(n+2)^*,\ldots, (n+k-1)^*$ until there is only one tree in $F$;
	\item Let $LT$ be the only tree in $F$.
\end{enumerate}

It is straightforward to see that the above two maps are inverse to each other, and thus $\psi$ is a bijection. Figure \ref{fig:meadow-decompose} shows an example.

\begin{figure}[H]
	\centering
	\tikzstyle{vertex} = [circle, fill, minimum width=3pt, inner sep=0pt]
	\begin{minipage}{.45\linewidth}
		\centering
		\subfigure{\adjustbox{valign=c}{
				\begin{tikzpicture}[grow=down]
					\tikzstyle{level 1} = [level distance=8mm, sibling distance=11mm]
					\tikzstyle{level 2} = [level distance=8mm, sibling distance=5mm]
					\tikzstyle{level 3} = [level distance=8mm, sibling distance=6mm]
					\tikzstyle{level 4} = [level distance=8mm, sibling distance=5mm]
					\node[vertex,label=90:{\footnotesize 1}]{}
					child {
						node[vertex,label=180:{\footnotesize 2}]{}
						child {
							node[vertex,label=180:{\footnotesize 3}]{}
							child {
								node[vertex,label=180:{\footnotesize 8}]{}
								child {
									node[vertex,label=-90:{\footnotesize 12}]{}
									edge from parent
								}
								child {
									node[vertex,label=-90:{\footnotesize 4}]{}
									edge from parent
								}
								edge from parent
							}
							child {
								node[vertex,label=180:{\footnotesize 7}]{}
								child {
									node[vertex,label=-90:{\footnotesize 13}]{}
									edge from parent
								}
								edge from parent
							}
							child {
								node[vertex,label=-90:{\footnotesize 15}]{}
								edge from parent
							}
							edge from parent
						}
						child {
							node[vertex,label=-90:{\footnotesize 10}]{}
							edge from parent
						}
					}
					child {
						node[vertex,label=0:{\footnotesize 9}]{}
						child {
							node[vertex,label=-90:{\footnotesize 5}]{}
							edge from parent
						}
						child {
							node[vertex,label=-90:{\footnotesize 11}]{}
							edge from parent
						}
						child {
							node[vertex,label=0:{\footnotesize 16}]{}
							child {
								node[vertex,label=-90:{\footnotesize 6}]{}
								edge from parent
							}
							child {
								node[vertex,label=-90:{\footnotesize 14}]{}
								edge from parent
							}
							edge from parent
						}
						edge from parent
					};
		\end{tikzpicture}}}
		\subfigure{\adjustbox{valign=c}{
				\begin{tikzpicture}
					\foreach \s in {1} {
					\clip (-.2*\s,-.7*\s) rectangle (\s,.5*\s);
					\draw[arrows={-latex}] (0,0)--(\s,0) ;
					\draw[arrows={-latex}] (\s,-.2*\s)--(0,-.2*\s);
					\draw 	(0.5*\s,0)	node[above]{$\psi$} 
					(0.5*\s,-0.2*\s)	node[below]{$\psi^{-1}$};}
		\end{tikzpicture}}}
	\end{minipage}
	\begin{minipage}{.45\linewidth}
		\centering
		\begin{tikzpicture}
			\tikzstyle{level 1} = [level distance=8mm, sibling distance=7mm]
			\node[vertex,label=90:{\footnotesize 1}]{}
			child {
				node[vertex,label=-90:{\footnotesize 22*}]{}
				edge from parent
			}
			child {
				node[vertex,label=-90:{\footnotesize 18*}]{}
				edge from parent
			};
		\end{tikzpicture}
		\begin{tikzpicture}[grow=down]
			\tikzstyle{level 1} = [level distance=8mm, sibling distance=7mm]
			\node[vertex,label=90:{\footnotesize 2}]{}
			child {
				node[vertex,label=-90:{\footnotesize 21*}]{}
				edge from parent
			}
			child {
				node[vertex,label=-90:{\footnotesize 10}]{}
				edge from parent
			};
		\end{tikzpicture}
		\begin{tikzpicture}
			\tikzstyle{level 1} = [level distance=8mm, sibling distance=7mm]
			\node[vertex,label=90:{\footnotesize 3}]{}
			child {
				node[vertex,label=-90:{\footnotesize 19*}]{}
				edge from parent
			}
			child {
				node[vertex,label=-90:{\footnotesize 20*}]{}
				edge from parent
			}
			child {
				node[vertex,label=-90:{\footnotesize 15}]{}
				edge from parent
			};
		\end{tikzpicture}
		
		\begin{tikzpicture}[grow=down]
			\tikzstyle{level 1} = [level distance=8mm, sibling distance=7mm]
			\node[vertex,label=90:{\footnotesize 7}]{}
			child {
				node[vertex,label=-90:{\footnotesize 13}]{}
				edge from parent
			};
		\end{tikzpicture}
		\begin{tikzpicture}[grow=down]
			\tikzstyle{level 1} = [level distance=8mm, sibling distance=7mm]
			\node[vertex,label=90:{\footnotesize 8}]{}
			child {
				node[vertex,label=-90:{\footnotesize 12}]{}
				edge from parent
			}
			child {
				node[vertex,label=-90:{\footnotesize 4}]{}
				edge from parent
			};
		\end{tikzpicture}
		\begin{tikzpicture}[grow=down]
			\tikzstyle{level 1} = [level distance=8mm, sibling distance=7mm]
			\node[vertex,label=90:{\footnotesize 9}]{}
			child {
				node[vertex,label=-90:{\footnotesize 5}]{}
				edge from parent
			}
			child {
				node[vertex,label=-90:{\footnotesize 11}]{}
				edge from parent
			}
			child {
				node[vertex,label=-90:{\footnotesize 17*}]{}
				edge from parent
			};	
		\end{tikzpicture}
		\begin{tikzpicture}[grow=down]
			\tikzstyle{level 1} = [level distance=8mm, sibling distance=7mm]
			\node[vertex,label=90:{\footnotesize 16}]{}
			child {
				node[vertex,label=-90:{\footnotesize 6}]{}
				edge from parent
			}
			child {
				node[vertex,label=-90:{\footnotesize 14}]{}
				edge from parent
			};
		\end{tikzpicture}
	\end{minipage}
	
	\caption{A labelled linear tree in $\mathcal{LT}(16, 7)$ and its corresponding small forest in {$\mathcal{F}(22, 7)$}.} \label{fig:meadow-decompose}
\end{figure}
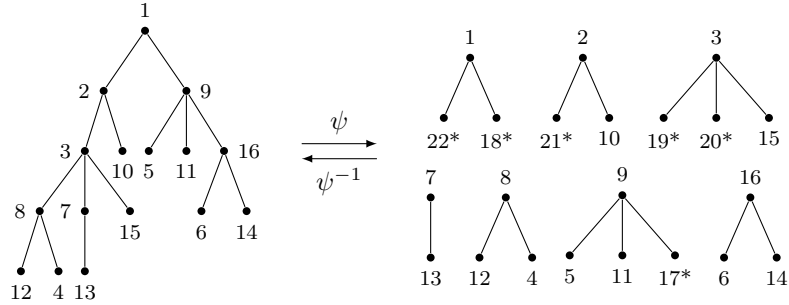

Through the construction of $\varphi$ and $\psi$, it is interesting to observe the  correspondences between properties of 2-regular simple stacks, linear trees, and small forests as shown in Table \ref{tab:relation}.

\begin{table}[H]
	\centering
	\caption{Correspondences between 2-regular simple stacks, linear trees and small forest.}\label{tab:relation}
	\resizebox{\textwidth}{!}{\begin{tabular}{m{0.33\textwidth}<{\centering}|m{0.33\textwidth}<{\centering}|m{0.33\textwidth}<{\centering}}
			\Xhline{0.3mm}
			$\mathcal{R}(n,k)$      & $\mathcal{T}(n-k+1,k+1)$            	  & $\mathcal{F}(n+1,k+1)$ \\\Xhline{0.3mm}
			$k$ arcs                 & $k+1$ internal vertices                    & $k+1$ small trees \\\hline
			$n-2k$ isolated vertices & $n-2k$ leaves                          	  & $n-2k$ unasterisked leaves   \\\hline
			arc of length $m$ and covering $m-1$ isolated vertices
			& outmost internal vertice with degree $m-1$ & small trees with $m-1$ unasterisked leaves \\\hline
			$b$ visible vertices     & $b$ leaves in the children of the root	  &	$b$ unasterisked leaves in the small tree that contains $(n+1)^*$	 \\\Xhline{0.3mm}
	\end{tabular}}
\end{table}

Remark that a vertex in a stack is called \textit{visible} if it is not covered by any arc.
For the last cell in Table \ref{tab:relation}, note that if $S\in \mathcal{R}(n,k)$ has only isolated vertices, then $T=\varphi(S)$ is a small tree, all the vertices of $S$ are visible, and $T$ remains unchanged under bijection $\psi$, so all the leaves of $\psi(T)$ are unasterisked.

Now we are ready to give our semi-bijective algorithm that maps a 2-regular simple stack to its corresponding small forest. The main idea is that first convert the 2-regular simple stack $S$ to an unlabeled linear tree, then label the vertices of the tree $\varphi(S)$, and finally apply bijection $\psi$ to produce the desired small forests. The full algorithm is stated as follows.

\begin{algorithm}[H]
    \renewcommand{\thealgorithm}{}
	\renewcommand{\algorithmicrequire}{\textbf{Input:}}
	\renewcommand{\algorithmicensure}{\textbf{Output:}}
	\caption{The semi-bijective algorithm {\tt STF}}\label{algorithm0}
	\begin{algorithmic}
		\Require  A 2-regular simple stack $S\in\mathcal{R}(n,k)$.
		\Ensure A set of small forests $\mathcal{F}\subseteq\mathcal{F}(n+1,k+1)$.
		\State \textbf{Step 1.} Set $T=\varphi(S)\in\mathcal{T}(n-k+1,k+1)$.
		\State \textbf{Step 2.} Label the vertices of $T$ distinctly with numbers in $[n-k+1]$ arbitrarily, and let $\mathcal{LT}$ be the set of the $(n-k+1)!$ labelled linear trees. 
		\State \textbf{Step 3.} Set $\mathcal{F}=\{F \mid F=\psi(T), T \in \mathcal{LT}\}$.	
	\end{algorithmic}
\end{algorithm}

In the following, we will use the algorithm {\tt STF} to enumerate saturated $m$-regular simple stacks, and a variant of {\tt STF} will be used to enumerate saturated extended 2-regular simple stacks.

\section{Saturated $m$-regular simple stacks}\label{sec:smrss}

In this section, we are concerned with the enumeration of saturated $m$-regular simple stacks through the semi-bijective algorithm {\tt STF}. 

Denote the set of saturated $m$-regular simple stacks on $[n]$ with $k$ arcs by $\mathcal{R}_s(n,k;m)$, and denote $ \mathcal{F}_s(n+1,k+1;m)$ the set of small forests generated from  $\mathcal{R}_s(n,k;m)$ by {\tt STF}, that is
\[
   \mathcal{F}_s(n+1,k+1;m)=\bigcup_{S\in\mathcal{R}_s(n,k;m)}\text{\tt STF}(S).
\]
Let ${R}_s(n,k;m)=|\mathcal{R}_s(n,k;m)|$ and ${F}_s(n,k;m)=|\mathcal{F}_s(n,k;m)|$. The following lemma characterizes the set $\mathcal{F}_s(n+1,k+1;m)$.

\begin{lem}\label{lem:R-s2F-s}
   {Assume $F$ is a small forest, then $F\in\mathcal{F}_s(n+1,k+1;m)$ if and only if $F\in\mathcal{F}(n+1,k+1)$ and the fiber of any small tree in $F$ satisfies the following properties:}
    \begin{enumerate}[(P1)]
        \item When the fiber contains no asterisked vertex, it must be of $m-1$ or $m$ unasterisked vertices.
        \item When the fiber contains asterisked vertices, {then the unasterisked vertices are all
        consecutive, and there are no more than $m$ of them.} Here unasterisked vertices are \textit{consecutive} if no asterisked vertex appears between unasterisked ones.
    \end{enumerate}
    Moreover, we have
    \begin{equation}\label{equ:R-s2F-s}
        {R}_s(n,k;m)=\frac{{F}_s(n+1,k+1;m)}{(n-k+1)!}.
    \end{equation}
\end{lem}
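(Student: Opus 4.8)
The plan is to prove the ``if and only if'' by turning saturation and $m$-regularity into local conditions on fibers, and then to deduce \eqref{equ:R-s2F-s} by a fibre-counting argument. First I would set up a dictionary between a small tree of $F=\text{\tt STF}(S)$ and the structure of $S$: unwinding $\varphi$ and $\psi$ as in Table~\ref{tab:relation}, each of the $k+1$ small trees of $F$ corresponds to exactly one internal vertex of $T=\varphi(S)$, that is, to an arc $(i,j)$ of $S$ or to the root $[0,n+1]$, and in its fiber the unasterisked leaves are exactly the isolated vertices lying directly under $(i,j)$ (the visible vertices, for the root), while the asterisked vertices are exactly the arcs directly nested in $(i,j)$. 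The two elementary facts I would record first are: (a) if two isolated vertices $a<b$ lie directly under a common arc but are separated in position by a nested arc $(c,d)$, then $b-a\ge(d-c)+2\ge m+2$, so the arc $(a,b)$ may be legally added---it is simple since $a,b$ are isolated, noncrossing since it only nests the arcs lying between them, and of length $\ge m$; and (b) a maximal run of $t$ consecutive isolated vertices directly under a common arc occupies $t$ consecutive integer positions, so its extreme endpoints are at distance $t-1$ and an arc of length $\ge m$ can be inserted among them exactly when $t\ge m+1$.

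For the forward direction I would take $S\in\mathcal{R}_s(n,k;m)$ and argue small tree by small tree. If a small tree carries no asterisk, its vertex is an outmost arc $(i,j)$ whose $j-i-1$ interior vertices are all isolated; $m$-regularity gives $j-i\ge m$, hence at least $m-1$ unasterisked leaves, while saturation with fact (b) forbids $j-i-1\ge m+1$, leaving at most $m$, which is (P1). If a small tree carries an asterisk, fact (a) with saturation forces the isolated vertices under its arc to be consecutive, and fact (b) with saturation forces at most $m$ of them, which is (P2); the root is covered by the same saturation argument and, carrying no length constraint, is consistent with (P2) imposing no lower bound.

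For the converse I would start from $F\in\mathcal{F}(n+1,k+1)$ satisfying (P1)--(P2), set $S=\varphi^{-1}(\text{unlabel}(\psi^{-1}(F)))\in\mathcal{R}(n,k)$, and read the fibers back through the dictionary. Here $m$-regularity follows by induction from the innermost arcs outward: an innermost arc is outmost, so it has length at least $m$ by the lower bound in (P1), and any arc properly containing an $m$-regular arc has strictly greater length. Saturation follows because any legally addable arc $(a,b)$ must have $a,b$ isolated and, by the noncrossing requirement, must sit directly under a single common arc or the root; hence $a$ and $b$ lie in one block of at most $m$ consecutive isolated vertices supplied by (P1)--(P2), and by fact (b) that block has extreme distance at most $m-1<m$, so no arc of length $\ge m$ fits. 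Thus $S\in\mathcal{R}_s(n,k;m)$ and $F\in\mathcal{F}_s(n+1,k+1;m)$.

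Finally I would establish \eqref{equ:R-s2F-s} by counting fibres of {\tt STF}. For fixed $S$, the tree $T=\varphi(S)$ has $n-k+1$ vertices, so Step~2 yields its $(n-k+1)!$ distinct labellings, and since $\psi$ is a bijection these give $(n-k+1)!$ distinct small forests; thus $|\text{\tt STF}(S)|=(n-k+1)!$. Distinct stacks produce disjoint outputs because $F$ determines $S$ uniquely via $\psi^{-1}$, label-forgetting, and $\varphi^{-1}$. Hence $\mathcal{F}_s(n+1,k+1;m)$ splits into $R_s(n,k;m)$ blocks of size $(n-k+1)!$, giving \eqref{equ:R-s2F-s}. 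I expect the main obstacle to be the saturation half of the converse: one must argue that the purely local fiber conditions globally rule out every addable arc, which hinges on correctly locating where a noncrossing arc of length $\ge m$ could be inserted, together with the validity checks (noncrossing, simple, length $\ge m$) for the arcs built in facts (a) and (b).
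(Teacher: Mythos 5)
Your proof is correct and takes essentially the same route as the paper: both pass through $\varphi$ to reformulate saturation and $m$-regularity as the local fiber conditions (R1)--(R2) on linear trees, transport them through $\psi$ to obtain (P1)--(P2), and derive \eqref{equ:R-s2F-s} from the $(n-k+1)!$ labellings of each unlabelled tree. Your facts (a) and (b) and the argument that any addable arc must join two isolated vertices lying directly under a common covering arc supply details that the paper's proof merely asserts, so your write-up is, if anything, more complete on the equivalence between saturation and the fiber conditions.
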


\begin{proof}
Obviously, $\mathcal{F}_s(n+1,k+1;m)\subseteq\mathcal{F}(n+1,k+1)$. Through the bijection $\varphi$ defined by \eqref{eq:bijection phi}, one can see that the saturated $m$-regular simple stacks correspond to those unlabelled linear trees satisfying the following two restrictions:
    \begin{enumerate}[(R1)]
        \item The fiber of any outmost internal vertex must be of $m-1$ or $m$ leaves.
        \item The fiber of any non-outmost internal vertex contains at most $m$ leaves, {which are all consecutive.}
    \end{enumerate}
Note that in Step 2 of algorithm {\tt STF}, each unlabelled linear tree corresponds to $(n-k+1)!$ labelled linear trees and thus equation \eqref{equ:R-s2F-s} holds.
Further applying the bijection $\psi$, the internal vertices (except the root) and leaves of each labelled linear tree are mapped to asterisked and unasterisked leaves of small trees in the corresponding small forest, respectively. This leads to the two properties (P1) and (P2).

{
Conversely, assume that $F\in \mathcal{F}(n+1,k+1)$ satisfying (P1) and (P2). 
Applying the inverse map of the bijiection $\psi$ to $F$, we see that  the asterisked and unasterisked leaves of small trees in $F$ correspond to the internal vertices (except the root) and leaves of a labelled linear tree, respectively. Let $LT:=\psi^{-1}(F)\in \mathcal{LT}(n-k+1,k+1)$, then it is direct to check that $LT$ satisfies (R1) and (R2) corresponding to the two properties (P1) and (P2) of $F$. 
Denote $T$ the unlabelled linear tree obtained from removing the labels of $LT$. 
The restrictions (R1) and (R2) guarantee the preimage of $T$ with respect to the bijiection $\varphi$ is a saturated $m$-regular simple stack.
}

{
Moreover, note that there is no restriction for the labeling of $LT$. Thus we have $(n-k+1)!$ labelled linear trees with the same configuration as $LT$, which leads to equation \eqref{equ:R-s2F-s} again.
}
\end{proof}

Let $[m,n]^*$ denote the set $\{m^*,(m+1)^*,\ldots,n^*\}$, and $[n]^*=[1,n]^*$. Based on relation \eqref{equ:R-s2F-s}, we can derive the following enumeration result on saturated $m$-regular simple stacks by counting the small forests in $\mathcal{F}_s(n+1,k+1;m)$.
	
\begin{thm} We have
    \begin{align}
	    R_s(n,k;m)=\left[x^{n-2k}y^k\right]\frac{\left(y-y^2+xy+x^{m-1}(1-y)^2-x^{m+1}\right)^{k+1}}{(k+1)(1-x)^{k+1}(1-y)^{2(k+1)}},\label{equ:R-s}
	\end{align}
	where  $n,k\geq 0$.
\end{thm}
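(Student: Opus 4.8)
The plan is to use Lemma~\ref{lem:R-s2F-s} to reduce the problem to counting the small forests in $\mathcal{F}_s(n+1,k+1;m)$ and then to assemble these forests from their individual small trees through a generating function. By \eqref{equ:R-s2F-s} it suffices to determine $F_s(n+1,k+1;m)$, the number of small forests in $\mathcal{F}(n+1,k+1)$ whose fibers all satisfy (P1) and (P2), since $R_s(n,k;m)=F_s(n+1,k+1;m)/(n-k+1)!$. I would first record the two kinds of labels present: the $n-k+1$ unasterisked vertices (the $k+1$ roots together with the $n-2k$ unasterisked leaves) carry the labels $[n-k+1]$, while the $k$ asterisked leaves carry the labels $\{n-k+2,\dots,n+1\}$; crucially, because $\psi$ is a bijection onto all of $\mathcal{F}(n+1,k+1)$, there is no extra constraint coupling the asterisked labels to the shape, so any bijective assignment of either label set is admissible.

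The combinatorial heart is a generating function for a single small tree, where I let $x$ mark an unasterisked leaf and $y$ mark an asterisked leaf in its fiber. Splitting according to (P1) and (P2): a fiber with no asterisk contributes $x^{m-1}+x^m$; a fiber with $s\ge 1$ asterisks contributes $1$ when it has no unasterisked leaf and $(s+1)x^j$ when it has $1\le j\le m$ unasterisked leaves, the factor $s+1$ counting the ways to insert the single consecutive block of unasterisked leaves among the $s$ asterisks. Summing gives the single-tree generating function
\[
h(x,y)=(x^{m-1}+x^m)+\sum_{s\ge 1}y^s\Bigl(1+(s+1)\sum_{j=1}^{m}x^j\Bigr),
\]
and a routine resummation of the geometric series (using $\sum_{s\ge1}(s+1)y^s=(2y-y^2)/(1-y)^2$ and $\sum_{j=1}^m x^j=(x-x^{m+1})/(1-x)$) collapses this to the closed form $h(x,y)=\dfrac{y-y^2+xy+x^{m-1}(1-y)^2-x^{m+1}}{(1-x)(1-y)^2}$.

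Next I would assemble the forest. The number of ordered sequences of $k+1$ fiber patterns with a total of $n-2k$ unasterisked and $k$ asterisked leaves is exactly $[x^{n-2k}y^k]\,h(x,y)^{k+1}$. For each such sequence the $n-k+1$ unasterisked slots can be filled with $[n-k+1]$ in $(n-k+1)!$ ways and the $k$ asterisked slots with $\{n-k+2,\dots,n+1\}$ in $k!$ ways, all admissible by the remark above, so the number of \emph{ordered} labelled forests is $(n-k+1)!\,k!\,[x^{n-2k}y^k]h^{k+1}$. Since all $k+1$ roots receive distinct labels, each unordered forest arises from exactly $(k+1)!$ orderings, whence $F_s(n+1,k+1;m)=\frac{(n-k+1)!\,k!}{(k+1)!}[x^{n-2k}y^k]h^{k+1}=\frac{(n-k+1)!}{k+1}[x^{n-2k}y^k]h^{k+1}$. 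Dividing by $(n-k+1)!$ via \eqref{equ:R-s2F-s} and substituting the closed form for $h$ yields \eqref{equ:R-s}.

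The step I expect to be the main obstacle is the labelling bookkeeping in the third paragraph: one must be sure that every bijective assignment of the asterisked labels produces a legitimate (and distinct) element of $\mathcal{F}(n+1,k+1)$, so that the factor $k!$ is correct, and that the passage from ordered to unordered forests contributes exactly $(k+1)!$. Both rest on the facts that $\psi$ is a bijection onto the \emph{entire} set $\mathcal{F}(n+1,k+1)$ and that distinct root labels make the $k+1$ small trees pairwise distinguishable; together these produce the somewhat unexpected normalizing factor $1/(k+1)=k!/(k+1)!$ appearing in \eqref{equ:R-s}. A secondary, purely computational care point is the clean derivation of the closed form for $h$, where the separate treatment of the $j=0$ (all-asterisk) fibers is what makes the numerator collapse correctly.
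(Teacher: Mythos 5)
Your proposal is correct and follows essentially the same route as the paper: both reduce to counting the small forests of Lemma~\ref{lem:R-s2F-s} via \eqref{equ:R-s2F-s}, compute the identical single-fiber generating function (the paper phrases it as an exponential generating function of labelled fibers, you as an ordinary generating function of fiber patterns with the factorials restored afterwards), and recover the normalizing factor $1/(k+1)$ from the root-labelling bookkeeping. The two presentations differ only in this EGF-versus-OGF packaging, not in substance.
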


\begin{proof}  Let $f(s,t)$ denote the number of fibers constructed by vertices on $[s]\cup [t]^*$ satisfying the properties in Lemma \ref{lem:R-s2F-s}. When $t=0$, by the property (P1) in Lemma \ref{lem:R-s2F-s}, it is easy to see that 
    \begin{equation*}
        f(s,0)=\left\{\begin{aligned}
            \notag &s!,             &&s=m-1\ \text{or}\ s=m,\\
            \notag &0,             &&\text{otherwise}.
        \end{aligned}\right.
    \end{equation*}
    When $t>0$, the property (P2) in Lemma \ref{lem:R-s2F-s} leads to  that 
    \begin{equation*}
        f(s,t)=\left\{\begin{aligned}
            \notag &t!,             &&s=0,\\
            \notag &s!(t+1)!,      &&1\leq s\leq m,\\
            \notag &0,             &&s>m.
        \end{aligned}\right.
    \end{equation*}
    Denote the exponential generating function of $f(s,t)$ by
    \[
    F(x,y;m)=\sum_{s,t\geq 0}f(s,t)\frac{x^s}{s!}\frac{y^t}{t!},
    \] 
    then we have
    \begin{align}
        \notag F(x,y;m)=&f(m-1,0)\frac{x^{m-1}}{(m-1)!}+f(m,0)\frac{x^{m}}{m!}+\sum_{t\geq 1} f(0,t)\frac{y^t}{t!}+\sum_{s=1}^m \sum_{t=1}^\infty f(s,t)\frac{x^s}{s!}\frac{y^t}{t!} \\[5pt]
        \notag=&(m-1)!\frac{x^{m-1}}{(m-1)!}+m!\frac{x^{m}}{m!}+\sum_{t\geq 1}t!\frac{y^t}{t!}+\left(\sum_{s=1}^{m}s!\frac{x^s}{s!}\right)\left(\sum_{t\geq 1}(t+1)!\frac{y^t}{t!}\right)\\[5pt]
    	\notag =&\left(\sum_{s=0}^{m}x^s\right)\left(\sum_{t\geq 0}(t+1)y^t\right)-\sum_{s=0}^{m-2}x^s-\sum_{t\geq 1}ty^t\\[5pt]
    	\notag =&\frac{1-x^{m+1}}{1-x}\cdot\frac{1}{(1-y)^2}-\frac{1-x^{m-1}}{1-x}-\frac{y}{(1-y)^2}\\[5pt]
    	=&\frac{y-y^2+xy+x^{m-1}(1-y)^2-x^{m+1}}{(1-x)(1-y)^2}.
    \end{align}
    Note that a small forest $F\in\mathcal{F}_s(n+1,k+1;m)$ contains $n-k+1$ unasterisked vertices, $n-2k$ unasterisked leaves, and $k$ asterisked leaves. To construct such a small forest, we can first choose $k+1$ unasterisked numbers as the roots' labels, and then choose the $k+1$ fibers, the numbers of which coincide with the generating function $F(x,y;m)$. Therefore
	\begin{align*}
	R_s(n,k;m)=&\frac{{F}_s(n+1,k+1;m)}{(n-k+1)!}\\[5pt]
	=&\frac{1}{(n-k+1)!}\binom{n-k+1}{k+1}\left[\frac{x^{n-2k}}{(n-2k)!}\cdot\frac{y^k}{k!}\right]\left(F(x,y;m)\right)^{k+1}\\[5pt]
	=&\left[x^{n-2k}y^k\right]\frac{\left(y-y^2+xy+x^{m-1}(1-y)^2-x^{m+1}\right)^{k+1}}{(k+1)(1-x)^{k+1}(1-y)^{2(k+1)}}.
	\end{align*}
\end{proof}

Note that an $m$-regular simple stack on $[n]$ contains at most $\lfloor\frac{n-m+1}{2}\rfloor$ arcs, thus $n\geq m-1$ and $0\leq k\leq \lfloor\frac{n-m+1}{2}\rfloor$ is a necessary condition for $R_s(n,k;m)\neq 0$. Specially, for the case of saturated 2-regular and 3-regular simple stacks, \eqref{equ:R-s} reduces to the following explicit formulas.

\begin{cor}\label{cor:R-s(n,k;2)}
	When $n, k\geq 0$, we have
	\begin{align}
	\label{equ:R-s2}    R_s(n,k;2)=&\dfrac{1}{k+1}\sum_{t=1}^{k+1}\binom{k+1}{t}\binom{2t+k-1}{t-1}\binom{t}{n-2k-t},\\
	R_s(n,k;3)=&\sum_{s=2}^{n-2 k} \sum_{t=1}^{s}\frac{(-1)^{s-t-1}}{k+1} \binom{k+1}{n-2 k-s}\binom{k+1}{t}\binom{t}{s-t}\binom{s+k-n-1}{s-t-1}.
	\end{align}
\end{cor}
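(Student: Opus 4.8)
The plan is to specialize the master formula \eqref{equ:R-s} at $m=2$ and $m=3$ and to read off the coefficient of $x^{n-2k}y^k$. The first thing I would record is a simplification that makes this painless: the degree-$m$ numerator appearing in \eqref{equ:R-s} is always divisible by $1-x$. Indeed, in the proof of the theorem establishing \eqref{equ:R-s} it equals $(1-x)(1-y)^2$ times the generating function $F(x,y;m)=\frac{1-x^{m+1}}{1-x}\cdot\frac1{(1-y)^2}-\frac{1-x^{m-1}}{1-x}-\frac{y}{(1-y)^2}$, whose pieces $\frac{1-x^{m+1}}{1-x}$ and $\frac{1-x^{m-1}}{1-x}$ are polynomials in $x$. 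Hence the spurious $(1-x)^{k+1}$ cancels and
\[
R_s(n,k;m)=\frac{1}{k+1}\bigl[x^{n-2k}y^k\bigr]F(x,y;m)^{k+1},
\]
with $F$ carrying only $(1-y)^2$ in its denominator. From here the proof is pure coefficient extraction, using the two standard facts $[x^a](1+x)^b=\binom{b}{a}$ and $[y^c](1-y)^{-d}=\binom{c+d-1}{d-1}$.

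For $m=2$ this is immediate. Collecting terms gives the variable-separated form $F(x,y;2)=\frac{x(1+x)}{(1-y)^2}+\frac{y}{1-y}$. I would expand the $(k+1)$st power by the binomial theorem, obtaining $\sum_{t}\binom{k+1}{t}x^t(1+x)^t y^{k+1-t}(1-y)^{-(k+1+t)}$. Reading off $[x^{n-2k}]$ from $x^t(1+x)^t$ gives $\binom{t}{n-2k-t}$, and reading off $[y^k]$ from $y^{k+1-t}(1-y)^{-(k+1+t)}$ gives $[y^{t-1}](1-y)^{-(k+1+t)}=\binom{2t+k-1}{t-1}$. The $t=0$ term vanishes through that last factor, and what remains is exactly \eqref{equ:R-s2}.

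For $m=3$ the same scheme applies, but the algebra is heavier. The useful preprocessing step is to factor the (now $(1-x)$-reduced) numerator as $G(x,y)=x^2+x^3+y+2xy-y^2-xy^2=(1+x-y)\bigl(x^2+(1+x)y\bigr)$, which one checks by expanding; thus $F(x,y;3)^{k+1}=(1-y)^{-2(k+1)}(1+x-y)^{k+1}\bigl(x^2+(1+x)y\bigr)^{k+1}$. Expanding $\bigl(x^2+(1+x)y\bigr)^{k+1}=\sum_t\binom{k+1}{t}x^{2(k+1-t)}(1+x)^t y^t$ already produces the factors $\binom{k+1}{t}$ and $(1+x)^t$ visible in the statement, while the remaining $(1+x-y)^{k+1}(1-y)^{-2(k+1)}$ is expanded and its coefficients extracted. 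This yields $R_s(n,k;3)$ as a double sum indexed by $t$ and a second summation variable.

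The hard part is reconciling this raw double sum with the precise form asserted in the second formula of Corollary~\ref{cor:R-s(n,k;2)}. Because the two variables remain coupled through the factor $(1+x-y)$, the dependence on $n$ emerges in the naive extraction packaged inside a single binomial coefficient, whereas in the stated formula it is split between $\binom{k+1}{n-2k-s}$ and $\binom{s+k-n-1}{s-t-1}$. Bridging the two amounts to (i) a Chu--Vandermonde convolution that peels a global $\binom{k+1}{n-2k-s}$ off the coupled $(1+x)$-power, and (ii) the upper-negation identity $\binom{-N}{j}=(-1)^j\binom{N+j-1}{j}$, which simultaneously accounts for the sign $(-1)^{s-t-1}$ and converts the leftover factor into $\binom{s+k-n-1}{s-t-1}$; one then verifies that the ranges $2\le s\le n-2k$ and $1\le t\le s$ are exactly those on which all four binomials are nonzero. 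This bookkeeping, rather than any conceptual difficulty, is the real obstacle, and it is precisely what the $m=2$ case avoids, which is why that formula collapses to a single sum.
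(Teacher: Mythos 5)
Your proposal is correct and follows essentially the same route as the paper: specialize \eqref{equ:R-s}, cancel the factor $(1-x)^{k+1}$ against the numerator, and for $m=3$ factor what remains as $(1+x-y)(x^2+xy+y)$; the $m=2$ computation is complete and agrees with the paper's (your variable-separated form $\frac{x(1+x)}{(1-y)^2}+\frac{y}{1-y}$ is just a repackaging of the paper's expansion of $(x^2+x-y^2+y)^{k+1}$). The only substantive difference is in how the quadratic factor is expanded for $m=3$. The paper convolves on the total $x$-power $s$ of $(x^2+xy+y)^{k+1}$ and groups it as $\bigl(x(x+y)+y\bigr)^{k+1}$, so that $[x^{s}]$ equals $\sum_t\binom{k+1}{t}\binom{t}{s-t}y^{t+k-s+1}$; together with $[x^{n-2k-s}](x+1-y)^{k+1}=\binom{k+1}{n-2k-s}(1-y)^{k+1-(n-2k-s)}$, the stated formula then drops out after a single upper negation of $\binom{n-k-t-1}{s-t-1}$. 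Your grouping $\bigl(x^2+(1+x)y\bigr)^{k+1}$ yields instead $\binom{t}{s-2(k+1-t)}$ attached to $y^{t}$, and the reconciliation you defer to ``bookkeeping'' is genuinely needed; note also that $\binom{k+1}{n-2k-s}$ already appears verbatim from the $(1+x-y)$-factor, so no Chu--Vandermonde is involved there. What actually bridges the two forms, term by term, is the subset-of-a-subset identity $\binom{N}{a}\binom{a}{b}=\binom{N}{b}\binom{N-b}{a-b}$ applied to $\binom{k+1}{\cdot}\binom{\cdot}{\cdot}$ (the lower indices match because in both expansions the innermost lower index equals the sum of the two outer ones minus $k+1$), followed by the same upper negation. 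So the sketch is sound, but that last identity should either be carried out explicitly or sidestepped by adopting the paper's grouping, which produces $\binom{t}{s-t}$ directly.
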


\begin{proof}
    Setting $m=2$ in \eqref{equ:R-s} leads to
    \begin{align*}
        R_s(n,k;2)=&\left[x^{n-2k}y^k\right]\frac{\left(x^2+x-y^2+y\right)^{k+1}}{(k+1)(1-y)^{2(k+1)}}\\
        =&\left[y^k\right]\frac{\displaystyle\sum_{t=0}^{k+1}\binom{k+1}{t}(y-y^2)^{k+1-t}\left[x^{n-2k}\right](x^2+x)^t}{(k+1)(1-y)^{2(k+1)}}\\
        =&\frac{1}{k+1}\sum_{t=0}^{k+1}\binom{k+1}{t}\left(\left[y^{t-1}\right](1-y)^{-(k+t+1)}\right)\left(\left[x^{n-2k-t}\right](x+1)^t\right)\\
        =&\frac{1}{k+1}\sum_{t=1}^{k+1}\binom{k+1}{t}\binom{2t+k-1}{t-1}\binom{t}{n-2k-t}.
    \end{align*}
    Substituting $m=3$ into \eqref{equ:R-s}, we have
    \begin{align*}
        R_s(n,k;3)=&\left[x^{n-2k}y^k\right]\frac{\left(x^{2}+x y+y\right)^{k+1}(x-y+1)^{k+1}}{(k+1)(1-y)^{2(k+1)}}\\
        =&\left[y^k\right]\frac{\displaystyle\sum_{s=0}^{n-2k}\left(\left[x^{s}\right](x^2+xy+y)^{k+1}\right)\left(\left[x^{n-2k-s}\right](x-y+1)^{k+1}\right)}{(k+1)(1-y)^{2(k+1)}},
    \end{align*}
    where
    \begin{align*}
        \left[x^{s}\right](x^2+xy+y)^{k+1}=&\sum_{t=0}^{k+1}\binom{k+1}{t}\binom{t}{s-t}y^{ t+k-s+1},\\[5pt]
        \left[x^{n-2k-s}\right](x-y+1)^{k+1}=&\binom{k+1}{n-2 k-s}(1-y)^{-(n-3 k-s-1)}.
    \end{align*}
    Hence,
    \begin{align*}
        R_s(n,k;3)=&\sum_{s=0}^{n-2 k} \sum_{t=0}^{k+1}\binom{k+1}{n-2 k-s}\binom{k+1}{t}\binom{t}{s-t}\left(\left[y^k\right]\frac{y^{t+k-s+1}}{(1-y)^{n-k-s+1}}\right)\\[5pt]
        =&\sum_{s=2}^{n-2 k} \sum_{t=1}^{s}\frac{(-1)^{s-t-1}}{k+1} \binom{k+1}{n-2 k-s}\binom{k+1}{t}\binom{t}{s-t}\binom{k+s-n-1}{s-t-1}.
    \end{align*}
\end{proof}

Let $LO_k(n)$ denote the number of $k$-saturated 2-regular simple stacks on $[n]$. According to the definition of $k$-saturated 2-regular simple stacks, we have
\begin{align}
	\label{equ:LOk2Rs}  LO_k(n)&=R_s\left(n,\left\lfloor \frac{n-1}{2}\right\rfloor -k ;2\right).
\end{align}
Hence, when $k=\left\lfloor (n-1)/2\right\rfloor$, \eqref{equ:R-s2} reduces to Clote’s results \cite[Corollary 13]{clote-combinatorics-2006} for the optimal 2-regular simple stacks, see \eqref{equ:LO-0}. Moreover, substituting $k=\left\lfloor (n-1)/2\right\rfloor-1$ and $k=\left\lfloor (n-1)/2\right\rfloor-2$ into \eqref{equ:R-s2}, we have the following results for $1$-saturated and $2$-saturated $2$-regular simple stacks, respectively. 

\begin{cor}
    For any $n\geq 3$, we have
    \begin{align}
        LO_1(n)=&\left\{\begin{aligned}
        	&\frac{(n-1) (n-3) }{192}(n^{2}+8 n+31),&&\text{if}\ n\ \text{is odd},\\[5pt]
        	&\frac{(n-2) (n-4) }{9216}(n^{4}+12 n^{3}+68 n^{2}-288 n-2304), &&\text{if}\ n\ \text{is even},
        \end{aligned}\right.\\
        LO_2(n)=&\left\{\begin{aligned}
        	&\begin{aligned}
        	    &\frac{\left(n-3\right) \left(n-5\right)\left(n-7\right)}{737280} \left(n^{5}+23 n^{4}+278 n^{3}+634 n^{2}\right.\\
        	    &\quad\left.-9879 n-52497\right),
        	\end{aligned}&&\text{if}\ n\ \text{is odd},\\[5pt]
        	&\begin{aligned}
        	    &\frac{\left(n-4\right) \left(n-6\right) \left(n-8\right) }{88473600}\left(n^{7}+28 n^{6}+400 n^{5}-560 n^{4}\right.\\
        	    &\quad\left.-56336 n^{3}-320768 n^{2}+1555200 n+13363200\right),
        	\end{aligned}
        	&&\text{if}\ n\ \text{is even}.
        \end{aligned}\right.
    \end{align}
\end{cor}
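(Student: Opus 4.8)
The plan is to specialize the explicit formula \eqref{equ:R-s2} for $R_s(n,k;2)$ by means of the reduction \eqref{equ:LOk2Rs}, which expresses $LO_1(n)$ and $LO_2(n)$ as $R_s(n,k;2)$ with $k=\lfloor (n-1)/2\rfloor-1$ and $k=\lfloor (n-1)/2\rfloor-2$, respectively. Since the floor depends on the parity of $n$, I would split each statement into the odd and even cases. Writing $n=2\ell+1$ or $n=2\ell$ and inserting the corresponding value of $k$, the decisive quantity $n-2k$ becomes a fixed small constant: for $LO_1(n)$ it equals $3$ when $n$ is odd and $4$ when $n$ is even, while for $LO_2(n)$ it equals $5$ when $n$ is odd and $6$ when $n$ is even.

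The key simplification is that the third binomial coefficient $\binom{t}{\,n-2k-t\,}$ in \eqref{equ:R-s2} vanishes unless $0\le n-2k-t\le t$, i.e. unless $\lceil (n-2k)/2\rceil\le t\le n-2k$. Because $n-2k$ is a small constant, this truncates the sum over $t$ to just two terms ($t\in\{2,3\}$) for $LO_1(n)$ odd, three terms ($t\in\{2,3,4\}$) for $LO_1(n)$ even, three terms ($t\in\{3,4,5\}$) for $LO_2(n)$ odd, and four terms ($t\in\{3,4,5,6\}$) for $LO_2(n)$ even. For each fixed surviving $t$, the summand $\frac{1}{k+1}\binom{k+1}{t}\binom{2t+k-1}{t-1}\binom{t}{\,n-2k-t\,}$ has $\binom{t}{\,n-2k-t\,}$ constant, $\binom{k+1}{t}$ a degree-$t$ polynomial in $k$ carrying the factor $(k+1)$, and $\binom{2t+k-1}{t-1}$ a degree-$(t-1)$ polynomial in $k$, so that after cancelling $k+1$ each term is itself a polynomial in $k$.

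Having reduced to a short closed sum, I would substitute $k$ back in terms of $n$ (via $k=\ell-1$ or $\ell-2$ and then $\ell$ in terms of $n$), expand the binomials, add the few terms, and collect the result into a single polynomial in $n$. The stated factored forms, with the linear factors $(n-1)(n-3)$, $(n-2)(n-4)$, $(n-3)(n-5)(n-7)$, and $(n-4)(n-6)(n-8)$ extracted, would then follow by factoring this polynomial. The degrees are consistent: for instance in the even case of $LO_2(n)$ the largest index is $t=6$, producing a term of degree $6+5-1=10$ in $k\sim n/2$, which matches the claimed degree-$10$ expression $(n-4)(n-6)(n-8)$ times a degree-$7$ factor.

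The main obstacle is purely computational rather than conceptual: it lies in the algebraic expansion and subsequent factoring, most severely in the even-$n$ case of $LO_2(n)$, where four terms of degree up to ten in $k$ must be combined and the cubic factor $(n-4)(n-6)(n-8)$ pulled out cleanly. This step is mechanical and best carried out (and checked) with a computer algebra system; no additional combinatorial insight beyond the vanishing of $\binom{t}{\,n-2k-t\,}$, which produces the crucial truncation of the sum, is required.
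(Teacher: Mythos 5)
Your proposal is correct and is exactly the paper's route: the corollary is obtained by substituting $k=\lfloor (n-1)/2\rfloor-1$ and $k=\lfloor (n-1)/2\rfloor-2$ into \eqref{equ:R-s2} via the reduction \eqref{equ:LOk2Rs}, splitting by the parity of $n$ and simplifying the resulting short sums. Your observation that $\binom{t}{n-2k-t}$ truncates the sum to the indicated few values of $t$, and your degree bookkeeping, are both accurate.
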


Note that for $LO_1(n)$, Clote \cite[Corollary 15]{clote-combinatorics-2006} obtained a recurrence relation which can be reformulated as follows,
\begin{align*}
    &LO_1(2m + 1) = LO_1(2m-1)+\frac{1}{3} m^{3}+\frac{1}{2} m^{2}+\frac{1}{6} m -1,\\
    &LO_1(2m) = LO_1(2m-2) + LO_1(2m-3)+\frac{1}{120} m^{5}+\frac{7}{24} m^{3}-m^{2}\\
    &\qquad\qquad\quad -\frac{3}{10} m+2+\sum_{i=1}^{m-1}\left(LO_1(2i-1) + LO_1(2m-2i-1)\right),
\end{align*}
where $m\geq 3$ and $L O_{1}(0)=L O_{1}(1)=L O_{1}(2)=L O_{1}(3)=L O_{1}(4)=0, LO_{1}(5)=4$.

\section{Enumeration of saturated extended 2-regular simple stacks with given primary component types}\label{sec:stack-given-arc-pattern} 

In this section, we devote to deriving a uniform explicit formula for enumerating saturated extended 2-regular simple stacks with any of the six primary component types. Clote's result \cite{clote-combinatorics-2006} on saturated 2-regular simple stacks is then a consequence of this uniform formula. 

The concept of primary component plays a key role in the enumeration of stacks. Following the structure decomposition idea proposed by Chen et al. \cite{chen-zigzag-2014}, the primary component is defined as the union of the connected components containing vertices 1 and $n$. Following the idea of Guo et al. \cite{guo-number-2022}, the primary component of saturated extended 2-regular simple stacks on $[n]$ can be classified into six types according to the degrees of vertices $1$ and $n$, see Table \ref{tab:extended saturated structures}.

\begin{table}[H]
	\centering
	\caption{Six primary component types of saturated extended 2-regular simple stacks on $[n]$.}\label{tab:extended saturated structures}
	\tikzstyle{nl} = [rectangle, minimum width=11pt,minimum height=8pt, inner sep=0pt,draw]
	\resizebox{\textwidth}{!}{\begin{tabular}{c|c}
			\Xhline{0.3mm}
			($\deg(1), \deg(n)$)&primary component types\\\Xhline{0.3mm}
			\makecell*[c]{(2,0), (0,2)}&
			
			\adjustbox{valign=c}{\begin{tikzpicture}
					\foreach \x in {1,4,7,10}
					\filldraw (\x *0.32,0) circle[radius=1pt];
					
					\draw (1*0.32,-0.2) node{\footnotesize 1}
					(10*0.32,-0.2) node{\footnotesize $n$}
					(5.5*0.32,-0.4) node{\footnotesize $\mathcal{A}_1$}
					
					(1*0.32,0) to [bend left=60] (4*0.32,0)
					(1*0.32,0) to [bend left=60] (7*0.32,0)
					(2.5*0.32,0) node[nl]{\tiny $T_2$}
					(5.5*0.32,0) node[nl]{\tiny $T_1$}
					(8.5*0.32,0) node[nl]{\tiny $T_6$};
			\end{tikzpicture}}\hspace{1cm}
			\adjustbox{valign=c}{\begin{tikzpicture}
					\foreach \x in {1,4,7,10}
					\filldraw (\x *0.32,0) circle[radius=1pt];
					
					\draw (1*0.32,-0.2) node{\footnotesize 1}
					(10*0.32,-0.2) node{\footnotesize $n$}
					(5.5*0.32,-0.4) node{\footnotesize $\mathcal{A}_1'$}
					
					(4*0.32,0) to [bend left=60] (10*0.32,0)
					(7*0.32,0) to [bend left=60] (10*0.32,0)
					(8.5*0.32,0) node[nl]{\tiny $T_2$}
					(5.5*0.32,0) node[nl]{\tiny $T_1$}
					(2.5*0.32,0) node[nl]{\tiny $T_6'$};
			\end{tikzpicture}}\\\hline
			
			\makecell*[c]{(2,1), (1,2) \\ $1,n$ form an arc}&
			
			\adjustbox{valign=c}{\begin{tikzpicture}
					\foreach \x in {1,4,7}
					\filldraw (\x *0.32,0) circle[radius=1pt];
					
					\draw (1*0.32,-0.2) node{\footnotesize 1}
					(7*0.32,-0.2) node{\footnotesize $n$}
					(4*0.32,-0.4) node{\footnotesize $\mathcal{A}_2$}
					
					(1*0.32,0) to [bend left=60] (4*0.32,0)
					(1*0.32,0) to [bend left=60] (7*0.32,0)
					(2.5*0.32,0) node[nl]{\tiny $T_2$}
					(5.5*0.32,0) node[nl]{\tiny $T_6$};
			\end{tikzpicture}}\hspace{1cm}
			\adjustbox{valign=c}{\begin{tikzpicture}
					\foreach \x in {4,7,10}
					\filldraw (\x *0.32,0) circle[radius=1pt];
					
					\draw (4*0.32,-0.2) node{\footnotesize 1}
					(10*0.32,-0.2) node{\footnotesize $n$}
					(7*0.32,-0.4) node{\footnotesize $\mathcal{A}_2'$}
					
					(4*0.32,0) to [bend left=60] (10*0.32,0)
					(7*0.32,0) to [bend left=60] (10*0.32,0)
					(8.5*0.32,0) node[nl]{\tiny $T_2$}
					(5.5*0.32,0) node[nl]{\tiny $T_6'$};
			\end{tikzpicture}}\\\hline
			
			\makecell*[c]{(2,1), (1,2) \\ $1,n$ do not form an arc}&
			
			\adjustbox{valign=c}{\begin{tikzpicture}
					\foreach \x in {1,4,7,10,13}
					\filldraw (\x *0.32,0) circle[radius=1pt];
					
					\draw (1*0.32,-0.2) node{\footnotesize 1}
					(13*0.32,-0.2) node{\footnotesize $n$}
					(7*0.32,-0.4) node{\footnotesize $\mathcal{A}_3$}
					
					(1*0.32,0) to [bend left=60] (4*0.32,0)
					(1*0.32,0) to [bend left=60] (7*0.32,0)
					(10*0.32,0) to [bend left=60] (13*0.32,0)
					(2.5*0.32,0) node[nl]{\tiny $T_2$}
					(5.5*0.32,0) node[nl]{\tiny $T_1$}
					(8.5*0.32,0) node[nl]{\tiny $T_3$}
					(11.5*0.32,0) node[nl]{\tiny $T_7$};
			\end{tikzpicture}}\hspace{1cm}
			\adjustbox{valign=c}{\begin{tikzpicture}
					\foreach \x in {-2,1,4,7,10}
					\filldraw (\x *0.32,0) circle[radius=1pt];
					
					\draw (-2*0.32,-0.2) node{\footnotesize 1}
					(10*0.32,-0.2) node{\footnotesize $n$}
					(4*0.32,-0.4) node{\footnotesize $\mathcal{A}_3'$}
					
					(-2*0.32,0) to [bend left=60] (1*0.32,0)
					(4*0.32,0) to [bend left=60] (10*0.32,0)
					(7*0.32,0) to [bend left=60] (10*0.32,0)
					(-.5*0.32,0) node[nl]{\tiny $T_7'$}
					(2.5*0.32,0) node[nl]{\tiny $T_3$}
					(5.5*0.32,0) node[nl]{\tiny $T_1$}
					(8.5*0.32,0) node[nl]{\tiny $T_2$};
			\end{tikzpicture}}\\\hline
			
			(1,1)&
			
			\adjustbox{valign=c}{\begin{tikzpicture}
					\foreach \x in {1,7}
					\filldraw (\x *0.32,0) circle[radius=1pt];
					
					\draw (1*0.32,-0.2) node{\footnotesize 1}
					(7*0.32,-0.2) node{\footnotesize $n$}
					(4*0.32,-0.4) node{\footnotesize $\mathcal{A}_4$}	
					
					(1*0.32,0) to [bend left=60] (7*0.32,0)
					(4*0.32,0) node[nl]{\tiny $T_5$};	
			\end{tikzpicture}}\\\hline
			
			\makecell*[c]{(2,2) \\ $1,n$ form an arc}&
			
			\adjustbox{valign=c}{\begin{tikzpicture}
					\foreach \x in {1,4,7,10}
					\filldraw (\x *0.32,0) circle[radius=1pt];
					
					\draw (1*0.32,-0.2) node{\footnotesize 1}
					(10*0.32,-0.2) node{\footnotesize $n$}
					(5.5*0.32,-0.4) node{\footnotesize $\mathcal{A}_5$}
					
					(1*0.32,0) to [bend left=60] (4*0.32,0)
					(1*0.32,0) to [bend left=60] (10*0.32,0)
					(7*0.32,0) to [bend left=60] (10*0.32,0)
					(2.5*0.32,0) node[nl]{\tiny $T_2$}
					(5.5*0.32,0) node[nl]{\tiny $T_1$}
					(8.5*0.32,0) node[nl]{\tiny $T_2$};
			\end{tikzpicture}}\\\hline
			
			\makecell*[c]{(2,2) \\ $1,n$ do not form an arc}&
			
			\adjustbox{valign=c}{\begin{tikzpicture}
					\foreach \x in {1,4,7,10,13,16}
					\filldraw (\x *0.32,0) circle[radius=1pt];
					
					\draw (1*0.32,-0.2) node{\footnotesize 1}
					(16*0.32,-0.2) node{\footnotesize $n$}
					(8.5*0.32,-0.4) node{\footnotesize $\mathcal{A}_6$}
					
					(1*0.32,0) to [bend left=60] (4*0.32,0)
					(1*0.32,0) to [bend left=60] (7*0.32,0)
					(10*0.32,0) to [bend left=60] (16*0.32,0)
					(13*0.32,0) to [bend left=60] (16*0.32,0)
					(2.5*0.32,0) node[nl]{\tiny $T_2$}
					(5.5*0.32,0) node[nl]{\tiny $T_1$}
					(8.5*0.32,0) node[nl]{\tiny $T_1$}
					(11.5*0.32,0) node[nl]{\tiny $T_1$}
					(14.5*0.32,0) node[nl]{\tiny $T_2$};
			\end{tikzpicture}}\\\Xhline{0.3mm}
	\end{tabular}}
\end{table}

As shown in Table \ref{tab:extended saturated structures}, the primary component splits $[n]$ into disjoint intervals, each of which contains a substructure. According to the degree and arc length restrictions of saturated extended 2-regular simple stacks, we can classify these substructures into seven types.

{Denote $T$ and $\widehat{T}$ an arbitrary nonempty saturated 2-regular simple stack, and a nonempty saturated 2-regular simple stack with no visible vertex, respectively. Let $\bullet$ and $\varepsilon$ stand for an isolated vertex and an empty stack, respectively.
Then with these notations, the seven types of the substructures in the intervals  can be represented as follows:
\begin{enumerate}[\textbullet]
	\item $T_1 = T + \varepsilon$: $T$ or an empty stack;
	\item $T_2 = T$;
	\item $T_3 = \widehat{T} + \varepsilon$: $\widehat{T}$ or an empty stack;
	\item $T_4 = \widehat{T}$;
	\item $T_5 = \widehat{T} + \bullet$: $\widehat{T}$ or an isolated vertex;
	\item $T_6 = \widehat{T}\bullet + \bullet + \widehat{T} + \varepsilon$: $T_3$ followed by an isolated vertex, or just $T_3$;
	\item $T_7 = \widehat{T} \bullet + \bullet + \widehat{T}$: $T_3$ followed by an isolated vertex, or just $\widehat{T}$.
\end{enumerate}}
Note that the substructures of types $T_1, T_3$ may be empty. $T'_6$ and $T'_7$ stand for the reverse structures of $T_6$ and $T_7$, respectively. 

In the following, we give a semi-bijective algorithm that maps an extended 2-regular simple stack $S$ on $[n]$ with a given primary component type $\mathcal{A}\in\{\mathcal{A}_i\}_{i=1}^6$ to a set of small forests, which is actually a variation of algorithm {\tt STF}. The idea is to preprocess $S$ before applying {\tt STF} and redefine the labeling rules in Step 2 of {\tt STF} to distinguish the primary component from the other parts. Denote this modified algorithm {\tt eSTF} which is stated as follows.

\begin{algorithm}[H]
    \renewcommand{\thealgorithm}{}
	\renewcommand{\algorithmicrequire}{\textbf{Input:}}
	\renewcommand{\algorithmicensure}{\textbf{Output:}}
	\caption{The semi-bijective algorithm {\tt eSTF}}\label{algorithm1}
	\begin{algorithmic}
		\Require  An extended 2-regular simple stack $S$ with $n$ vertices and $k$ arcs, denote $k_1$ the number of arcs in the primary component of $S$.
		\Ensure A set of small forests $\mathcal{F}\subseteq \mathcal{F}(n+d+1,k+1)$.
		\State \textbf{Step 0.} For $v\in \{1,n\}$, if $\deg v=0$, delete $v$ from $S$; if $\deg v=1$, do nothing; if $\deg v=2$, add a new vertex $u$ to the left of $v$ and bond one of the two arcs at $v$ to $u$ so that no crossing occurs. Denote $d=\sum\limits_{v\in \{1,n\}}(\deg v-1)$. Relabel the vertices of $S$ by $[n+d]$ from left to right.
		\State \textbf{Step 1.} Apply the Schmitt-Waterman's bijection on $S$, set $T=\varphi(S)\in\mathcal{T}(n+d-k+1,k+1)$.
		\State \textbf{Step 2.} Denote $P$ the vertex subset of $T$ consisting of the root and the $k_1$ internal vertices corresponding to $k_1$ arcs in the primary component. Label the vertices of $P$ by breadth-first order with $1,\ldots,k_1+1$, and label the other vertices in $T$ by $[k_1+2,n+d-k+1]$ in any of the $(n+d-k-k_1)!$ ways. Denote the set of these $(n+d-k-k_1)!$ labelled linear trees by $\mathcal{LT}$. 
		\State \textbf{Step 3.} Set $\mathcal{F}=\{F \mid F=\psi(T), T \in \mathcal{LT}\}$.
	\end{algorithmic}
\end{algorithm}

The following example illustrates algorithm {\tt eSTF}.
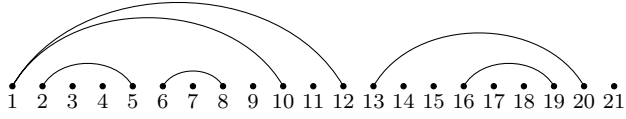
\begin{figure}[H]
	\centering
	\begin{tikzpicture}
		\foreach \x in {1,...,21}
		\filldraw (\x *0.4,0) circle[radius=1pt];
		\foreach \x in {1,...,21}
		\draw (\x*0.4,-0.2) node{\footnotesize \x};
		\draw 
		(1*0.4,0) to [bend left=60] (12*0.4,0)
		(1*0.4,0) to [bend left=60] (10*0.4,0)
		(2*0.4,0) to [bend left=60] (5*0.4,0)
		(6*0.4,0) to [bend left=60] (8*0.4,0)
		(13*0.4,0) to [bend left=60] (20*0.4,0)
		(16*0.4,0) to [bend left=60] (19*0.4,0);
	\end{tikzpicture}
	\caption{A saturated extended 2-regular simple stack.}\label{fig:extended simple stack}
\end{figure}
In the saturated extended 2-regular simple stack in Figure \ref{fig:extended simple stack}, $n=21, k=6, k_1=2, d=0$. By Step 0 and Step 1, we get the simple stack $S$ and its corresponding unlabelled linear tree $\varphi(S)$ as shown in Figure \ref{fig:saturated RNA and linear tree}. The new labeling rule will produce $13!$ different labelled trees from $\varphi(S)$. Figure \ref{fig:meadow-decompose} shows one of the labelled linear trees and its corresponding small forest. 

According to Lemma \ref{lem:R-s2F-s}, saturated 2-regular simple stacks will produce a subset of small forests in which the fiber of any small tree contains no unasterisked vertex, or one unasterisked vertex, or two adjacent unasterisked vertices, and we call such a fiber \textit{saturated}. Corresponding to the substructures of type $T_1,T_2,T_3,T_4$, we can classify the fiber of small trees into the following four types.
\begin{enumerate}[\textbullet]
	\item $F_1$: an arbitrary saturated fiber;
	\item $F_2$: an arbitrary nonempty saturated fiber;
	\item $F_3$: an arbitrary saturated fiber with no unasterisked vertices;
	\item $F_4$: an arbitrary nonempty saturated fiber with no unasterisked vertices;
\end{enumerate}

Let $\mathcal{R}_s(n,k;k_1,d)$ denote the set of extended saturated 2-regular simple stacks on $[n]$ with $k$ arcs whose primary component $\mathcal{A}$ contains $k_1$ arcs and $\sum\limits_{v\in \{1,n\}}(\deg v-1)=d$. Denote the set of small forests corresponding to $\mathcal{R}_s(n,k;k_1,d)$ by
\begin{align*}
   \mathcal{F}_s(n+d+1,k+1;k_1,d)=&\bigcup_{S\in\mathcal{R}_s(n,k;k_1,d)}\text{\tt eSTF}(S).
\end{align*}
Set ${R}_s(n,k;k_1,d)=|\mathcal{R}_s(n,k;k_1,d)|$ and ${F}_s(n+d+1,k+1;k_1,d)=|\mathcal{F}_s(n+d+1,k+1;k_1,d)|$.
Similarly to Lemma \ref{lem:R-s2F-s},  we have the following conclusion.
	
\begin{lem}\label{lem:R-skd2F-s}
   Suppose that $F$ is a small forest. If $F\in\mathcal{F}_s(n+d+1,k+1;k_1,d)$, then 
    \begin{enumerate}[(P1)]
    	\item The fiber of any small tree in $F$ contains no unasterisked vertices, or one unasterisked vertices, or two adjacent unasterisked vertices.
    	\item The vertices $1,2,\ldots,k_1+1$ are roots of the small trees, and the positions of vertices $(n+d+1)^*,\ldots,(n+d-k_1+2)^*$ are fully determined by the primary component $\mathcal{A}$.
    	\item The vertex $(n+d-k_1+1)^*$, if exists, must be a leaf of one of the small trees with roots $1,2,\ldots,k_1+1$.
    \end{enumerate}
    Moreover,
    \begin{equation}\label{equ:Rskd2Fskd}
        R_s(n,k;k_1,d)=\frac{F_s(n+d+1,k+1;k_1,d)}{(n+d-k-k_1)!}.
    \end{equation}
\end{lem}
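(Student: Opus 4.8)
The plan is to establish Lemma \ref{lem:R-skd2F-s} by carefully tracing the modified algorithm {\tt eSTF} through its three maps, in direct analogy with the proof of Lemma \ref{lem:R-s2F-s}, while paying special attention to how the preprocessing in Step 0 and the breadth-first labeling in Step 2 distinguish the primary component $\mathcal{A}$ from the remaining substructures. First I would verify that after Step 0 the resulting stack on $[n+d]$ is a genuine saturated $2$-regular simple stack, so that Lemma \ref{lem:R-s2F-s} (with $m=2$) applies and guarantees property (P1): the fiber of every small tree is saturated, i.e.\ contains no unasterisked vertex, exactly one, or two adjacent ones. This part is essentially inherited from the earlier lemma, since the composition $\psi\circ(\text{labeling})\circ\varphi$ is unchanged in its local fiber structure; the new ingredient is only \emph{which} labels land where.

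The core of the argument is property (P2). Here I would track the effect of the breadth-first labeling rule: the root of $T=\varphi(S)$ together with the $k_1$ internal vertices corresponding to the $k_1$ arcs of the primary component receive the labels $1,\dots,k_1+1$. Under $\psi$, each such internal vertex (other than the root) is asterisked, so I must show that these are precisely the vertices that become $(n+d+1)^*,\dots,(n+d-k_1+2)^*$ in the deasterisking order of $\psi$, and that their positions in $F$ are rigidly dictated by the shape of $\mathcal{A}$. The key observation is that $\psi$ processes outmost internal vertices in decreasing order of their labels; since the primary-component internal vertices carry the \emph{smallest} labels $2,\dots,k_1+1$, they are the \emph{last} to be removed and hence receive the \emph{largest} asterisked labels. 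I would then argue that, because $\mathcal{A}$ has a fixed arc pattern (one of the six types in Table \ref{tab:extended saturated structures}), the nesting relations among these $k_1$ arcs are completely determined, and therefore so are the parent-child placements of $(n+d-k_1+2)^*,\dots,(n+d+1)^*$ among the small trees rooted at $1,\dots,k_1+1$. Property (P3) then follows by identifying $(n+d-k_1+1)^*$ as the next vertex removed by $\psi$, namely the first internal vertex lying strictly outside the primary component; by construction it is attached as a leaf under one of the primary-component roots.

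For the enumeration formula \eqref{equ:Rskd2Fskd}, I would repeat the counting argument of Lemma \ref{lem:R-s2F-s}: the labels $1,\dots,k_1+1$ and the top $k_1$ asterisked labels are forced by $\mathcal{A}$, while the remaining $n+d-k-k_1$ vertices of $T$ may be labeled in any of the $(n+d-k-k_1)!$ orders in Step 2, each yielding a distinct small forest. Since no two distinct labelings collide under the injective map $\psi$, every stack in $\mathcal{R}_s(n,k;k_1,d)$ accounts for exactly $(n+d-k-k_1)!$ small forests in $\mathcal{F}_s(n+d+1,k+1;k_1,d)$, giving \eqref{equ:Rskd2Fskd}. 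The main obstacle will be the bookkeeping in (P2): I expect the delicate point to be proving rigorously that the breadth-first labeling of $P$ forces the asterisked labels $(n+d-k_1+2)^*,\dots,(n+d+1)^*$ into positions that depend \emph{only} on the type $\mathcal{A}_i$ and not on the rest of $S$. This requires a careful case analysis tying the removal order of $\psi$ to the nesting poset of the primary arcs; a clean way to handle it is to observe that the subtree induced by $P$ in $T$ is itself determined by $\mathcal{A}$, and that $\psi$ restricted to this subtree commutes with the global $\psi$ once all non-primary internal vertices have already been stripped away.
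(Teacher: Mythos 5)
Your proposal follows essentially the same route as the paper's proof: (P1) is inherited from Lemma \ref{lem:R-s2F-s} with $m=2$, (P2) and (P3) come from observing that the breadth-first-labelled primary-component vertices, carrying the smallest labels $1,\ldots,k_1+1$, are the last internal vertices stripped by $\psi$ and hence receive the top $k_1$ asterisked labels in positions dictated by $\mathcal{A}$, and the factor $(n+d-k-k_1)!$ counts the free labelings of the remaining vertices. The only slight imprecision is your claim that $\psi$ removes internal vertices in globally decreasing label order (it removes the largest \emph{currently outmost} one, which need not be globally decreasing); the conclusion you actually need---that every non-primary internal vertex is removed before any primary one---still holds, because a deepest unprocessed non-primary internal vertex is always outmost and its label exceeds every label in $[k_1+1]$.
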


\begin{proof}
    Assume $S\in \mathcal{R}_s(n,k;k_1,d)$. By Step 0 of {\tt eSTF}, we obtain a saturated simple stack in $\mathcal{R}(n+d,k)$, still denoted by $S$. Step 1 of {\tt eSTF} maps $S$ to a unlabelled tree $T\in\mathcal{T}(n+d-k+1,k+1)$ in which the fiber of any vertex contains no leaf, or one leaf, or two adjacent leaves, which is the case $m=2$ in Lemma \ref{lem:R-s2F-s}, thus property (P1) holds. 
    
    According to Step 2 of {\tt eSTF}, the numbers in $[k_1+1]$ are the labels of {some} internal vertices, and thus they are the roots of {some of} the small trees. Note that the primary component $\mathcal{A}$ corresponds to the vertices in $P$ except the root, which are labelled by $2,\ldots,k_1+1$ in Step 2 and relabelled by $(n+d+1)^*,\ldots,(n+d-k_1+2)^*$ in bijection $\psi$. Thus the positions of vertices $(n+d+1)^*,\ldots,(n+d-k_1+2)^*$ are fully determined by $\mathcal{A}$ and property (P2) follows.
    
    For property (P3), assume that $LT$ is one of the labelled linear trees obtained after Step 2. Note that $(n+d-k_1+1)^*$ is the largest asterisked vertex, except the vertices in $[n+d-k_1+2,n+d+1]^*$, this implies that $(n+d-k_1+1)^*$ is the new label of a vertex whose parent is in $[1,k_1+1]$. Therefore $\psi(LT)$ satisfies the property (P3). 
    
    Equation \eqref{equ:Rskd2Fskd} is straightforward by noting that the vertices of $T$, except those in $P$, are labelled arbitrarily in any of the $(n+d-k-k_1)!$ ways. 
\end{proof}

Lemma \ref{lem:R-skd2F-s} shows that the number of saturated extended 2-regular simple stacks can be obtained by enumerating the corresponding small forests. To this end, the following basic notations and properties on set partitions are prerequisite.

Recall a \textit{partition} of a finite set $S$ is a collection $\pi=\{B_1,B_2,\ldots,B_k\}$ of subsets $B_i\subseteq S$ such that $B_i\neq\varnothing$, $B_i\cap B_j=\varnothing$ for $i\neq j$, and $B_1\cup B_2\cup \ldots \cup B_k=S$. We call $B_i\in \pi$ a \textit{block} of $\pi$. An \textit{ordered partition} is a set partition in which the blocks are linearly ordered.  If the elements of each block of $\pi$ are ordered linearly, we call $\pi$ an \textit{inner-ordered partition} of $S$. An ordered partition is called \textit{dual-ordered} if it is also inner-ordered.
If a partition $\pi$ contains exactly $k$ blocks, we call $\pi$ a $k$-partition.

\begin{lem}\label{lem:composition}
	The number of inner-ordered $k$-partitions of $[n]$ that each block contains at most two elements is $\frac{n!}{k!}\binom{k}{n-k}$.
\end{lem}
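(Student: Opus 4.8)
The plan is to count inner-ordered $k$-partitions of $[n]$ in which every block has size one or two, by first fixing the \emph{shape} of the partition and then counting the ways to fill it. Since each block contains at most two elements and there are $k$ blocks covering $n$ elements, if $j$ denotes the number of two-element blocks, then the remaining $k-j$ blocks are singletons, and counting elements gives $2j+(k-j)=n$, hence $j=n-k$. Thus the number of size-two blocks is forced to be exactly $n-k$, and the number of singletons is $k-(n-k)=2k-n$; in particular such partitions exist only when $n\le 2k$ and $n\ge k$, which the falling-factorial/binomial expression $\binom{k}{n-k}$ automatically encodes (it vanishes outside this range).

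With the shape fixed, I would assemble the count as a product of three independent choices. First, choose which $n-k$ of the $k$ blocks are the two-element blocks: this gives $\binom{k}{n-k}$. Second, distribute the $n$ elements of $[n]$ into the labeled block-slots; since the blocks are already distinguished (the partition is \emph{ordered} in the sense that we have fixed which slots are doubletons and which are singletons), placing the elements is equivalent to choosing an ordered set partition into blocks of the prescribed sizes, contributing a multinomial factor. Third, account for the \emph{inner-ordering}: inside each two-element block the two elements may be arranged in $2!=2$ ways, while singleton blocks contribute nothing. The cleanest bookkeeping is to treat the whole thing as a multinomial coefficient $\frac{n!}{(2!)^{\,n-k}(1!)^{\,2k-n}}$ for the unordered-block-contents distribution, then multiply by $2^{\,n-k}$ for inner-ordering, which cancels the $(2!)^{n-k}$ in the denominator; but because the $k$ blocks are not distinguished a priori beyond their sizes, I must divide by the symmetries among equal-sized blocks, namely $(n-k)!$ for permuting the doubletons among themselves and $(2k-n)!$ for permuting the singletons.

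Collecting these factors, the count is
\[
\frac{n!\,2^{\,n-k}}{(2!)^{\,n-k}\,(n-k)!\,(2k-n)!}
=\frac{n!}{(n-k)!\,(2k-n)!}.
\]
It then remains only to verify the algebraic identity
\[
\frac{n!}{(n-k)!\,(2k-n)!}=\frac{n!}{k!}\binom{k}{n-k},
\]
which is immediate upon writing $\binom{k}{n-k}=\dfrac{k!}{(n-k)!\,(2k-n)!}$ and cancelling $k!$. I expect the main obstacle — really a matter of care rather than difficulty — to be the overcounting argument: one must be precise about which symmetries are already broken by the ``inner-ordered'' and block-size data versus which must be quotiented out, so that the factors of $2^{n-k}$, $(n-k)!$, and $(2k-n)!$ are neither double-counted nor omitted. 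An alternative, perhaps more transparent, route would be a direct sequential placement: line up the $n$ elements, greedily form blocks, and track the resulting overcount; I would present whichever version makes the cancellation down to $\frac{n!}{k!}\binom{k}{n-k}$ most transparent, and I would sanity-check the formula on a small case such as $n=3,k=2$ (which should yield $3$ partitions: $\{12\},\{3\}$ in either inner-order, i.e.\ $\{\overline{12}\},\{\overline{21}\}$ paired with the singleton, giving $2$, plus the symmetric placements) to confirm the constant.
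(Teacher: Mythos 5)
Your count is correct and the verification at the end is sound, but your route differs from the paper's. You fix the block-size multiset first (observing that exactly $n-k$ doubletons and $2k-n$ singletons are forced), count the underlying set partitions of that type via the standard multinomial-with-symmetry formula, and multiply by $2^{n-k}$ for the inner orderings, arriving at $\frac{n!}{(n-k)!\,(2k-n)!}$ and then matching it algebraically to $\frac{n!}{k!}\binom{k}{n-k}$. The paper instead passes through \emph{dual-ordered} partitions: linearly order $[n]$ in $n!$ ways, cut the resulting sequence into $k$ consecutive blocks of size $1$ or $2$ --- counted by the composition generating function $[x^n](x+x^2)^k=\binom{k}{n-k}$ --- and finally divide by $k!$ to forget the order of the blocks. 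The paper's version produces the target expression $\frac{n!}{k!}\binom{k}{n-k}$ directly in the form in which it is later used, and the linearize-then-cut template is exactly the one reused in Lemma 4 (and generalizes immediately if the allowed block sizes change), whereas your argument is more self-contained but lands on the answer in a different algebraic form. One expositional wobble to clean up: in your ``first step'' you introduce a factor $\binom{k}{n-k}$ for choosing which blocks are doubletons, but this factor correctly does \emph{not} appear in your collected product --- once you quotient by the $(n-k)!\,(2k-n)!$ permutations of equal-sized blocks, there is no residual choice of which unlabeled blocks are doubletons, and multiplying the $\binom{k}{n-k}$ in would overcount. You flag this as the delicate point yourself; just make sure the final write-up uses one consistent bookkeeping (either labeled slots divided by $k!$, or unlabeled blocks divided by $(n-k)!\,(2k-n)!$, not a mixture).
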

\begin{proof}
	To obtain a dual-ordered $k$-partition of $[n]$ that each block contains at most two elements, we first linearly order  $[n]$ in $n!$ ways. Then divide each sequence on $n$ elements into $k$ linearly ordered nonempty blocks such that each block contains at most two elements, which can be obtained through the following generating function
	\[[x^{n}](x+x^2)^k=[x^{n-k}]\sum_{i=0}^{k}\binom{k}{i}x^{i}=\binom{k}{n-k}.\]
	At last, dividing $k!$ to remove the order of the $k$ blocks completes the proof.
\end{proof}

Next, we consider the inner-ordered partitions of a union set consisting of two kinds of elements.  

\begin{lem}\label{lem:order-composition} 
	Let $S=\{s_i\}_{i=1}^s$ and $T=\{t_j\}_{j=1}^k$. Then the number of inner-ordered $k$-partitions on the set $S\cup T$ such that each block contains exactly one element in $T$ is $s!\binom{2k+s-1}{s}.$	
\end{lem}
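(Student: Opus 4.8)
The plan is to exploit the fact that the hypothesis ``each block contains exactly one element of $T$'' makes the blocks \emph{canonically labelled}. Since there are exactly $k$ blocks and exactly $k$ elements $t_1,\dots,t_k$ of $T$, a counting argument forces each block to contain precisely one $t_j$, so assigning to each block its unique $T$-element is a bijection between the blocks and $T$. Consequently, even though $\pi$ is an unordered collection of blocks, the blocks are automatically distinguishable, and I may treat them as labelled by $t_1,\dots,t_k$ with no risk of overcounting. This is exactly the point of departure from Lemma~\ref{lem:composition}, where the blocks carried no intrinsic labels and a division by $k!$ was therefore required; here no such division is needed.

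First I would analyse a single block. The block labelled by $t_j$ consists of $t_j$ together with some subset of $S$, arranged in a linear order; the position of $t_j$ in this order splits the $S$-elements of the block into those preceding $t_j$ and those following it. I would therefore record each block as an ordered pair of (possibly empty) linearly ordered lists of $S$-elements, namely the list to the left of $t_j$ and the list to its right. This encoding is reversible: reinserting $t_j$ between the two lists reconstructs the block, with empty lists allowed so that $t_j$ may sit at either extreme or stand alone. Ranging over all $k$ blocks yields $2k$ linearly ordered lists, one ``left'' and one ``right'' per $t_j$, and these $2k$ lists partition the $s$ elements of $S$. Conversely, any distribution of the $s$ elements of $S$ into these $2k$ distinguishable ordered lists reconstitutes, upon inserting each $t_j$ between its pair of lists, a valid inner-ordered $k$-partition of the required form. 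This establishes a bijection between the objects to be counted and the distributions of $s$ distinct elements into $2k$ distinguishable ordered lists.

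It then remains to count the latter. For a fixed weak composition $(c_1,\dots,c_{2k})$ of $s$ prescribing the list sizes, the number of fillings is $\binom{s}{c_1,\dots,c_{2k}}\prod_{i} c_i! = s!$, independent of the composition, while the number of weak compositions of $s$ into $2k$ nonnegative parts is $\binom{s+2k-1}{s}$. Summing over all compositions gives $s!\binom{2k+s-1}{s}$, as claimed. Equivalently, one may package a single labelled block by the exponential generating function $\sum_{a\ge 0}(a+1)!\,x^a/a! = (1-x)^{-2}$ in the $S$-variable, raise it to the $k$th power for the $k$ labelled blocks to obtain $(1-x)^{-2k}$, and read off $s!\,[x^s](1-x)^{-2k} = s!\binom{2k+s-1}{s}$.

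The step I expect to demand the most care is the bijection of the second paragraph: I must check that the left/right encoding is genuinely invertible, that empty lists are handled correctly, and, most importantly, that the canonical labelling of blocks by their $T$-elements is precisely what makes the count of labelled distributions agree with the count of unordered partitions without any correction factor. Once that is secured, the enumeration of distributions into ordered lists is routine.
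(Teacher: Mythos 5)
Your proof is correct and takes essentially the same approach as the paper's: both decompose each block into the ordered lists of $S$-elements lying to the left and to the right of its unique $T$-element, reducing the count to distributions of the $s$ elements of $S$ into $2k$ ordered slots, which gives $s!\binom{2k+s-1}{s}$. The only difference is bookkeeping — you exploit the canonical labelling of blocks by their $T$-elements to work with distinguishable blocks directly, whereas the paper passes through dual-ordered partitions (ordering the blocks and the elements of $T$ in $k!$ ways each) and then divides by $k!$ to forget the block order.
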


\begin{proof} Note that any inner-ordered $k$-partition under consideration can be obtained from  a dual-ordered $k$-partition by neglecting the order of the blocks. To construct a dual-ordered $k$-partition, we first construct a sequence of length $2k-1$ consisting of alternatively appeared $t_j\in T$ and vertical bars, then from the $2k$ positions before or after each element of the sequence, choose $s$ positions with repetitions in $\big(\!\binom{2k}{s}\!\big)$ ways to place the elements of $S$.
At last, linearly ordering the elements of $S$ and $T$ in $s!$ and $k!$ ways, respectively, and neglecting the order of the $k$ blocks completes the proof. 
\end{proof}

\begin{lem}\label{lem:p-dual-ordered partition}
	Given $l\geq 1,r\geq 0,u\geq 0,v\geq 0, r+v\geq 1$, denote $C(l,r,u,v)$ the number of dual-ordered $(u+v+r)$-partitions on $[l]\cup[l+1,l+r]^*$ with the following three properties:
	\begin{enumerate}[(a)]
		\item Each block contains no unasterisked element, or one unasterisked element, or two adjacent unasterisked elements.
		\item The first $u$ blocks contain no unasterisked element.
		\item The element $(l+r)^*$ is contained in one of the first $u+v$ blocks.
	\end{enumerate}
	Then we have
	\begin{equation}\label{equ:C(l,r,u,v)}
		C(l,r,u,v)=l!r!\sum_{t=1}^{\min\{l,r+v\}}\binom{t}{l-t}\binom{r+v-1}{t-1}\binom{2t+r}{t-u-v}f(t,r,u,v),
	\end{equation}
	where
	\begin{equation}\label{equ:f(t,r,u,v)}
		f(t,r,u,v)=\frac{ut+(u+v)(t+r+v)}{t(2t+r)}.
	\end{equation}
\end{lem}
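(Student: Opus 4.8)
The plan is to classify the dual-ordered partitions by the number $t$ of blocks that contain an unasterisked element (call these \emph{u-blocks}); the remaining $u+v+r-t$ blocks are \emph{a-blocks}, each necessarily nonempty and hence holding at least one asterisked element. Counting nonempty a-blocks forces $t\ge u+v$, while the size restriction in (a) together with the covering of all of $[l]$ forces $\lceil l/2\rceil\le t\le l$, and (b) forces $t\le v+r$; outside the range $1\le t\le\min\{l,v+r\}$ every factor below vanishes, so I may sum $t$ over exactly that range. For fixed $t$ I will build the partition in three stages and then impose (c).

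First, the unasterisked content. By (a) the unasterisked elements of a u-block form a single ordered run of length $1$ or $2$, so distributing $[l]$ among the $t$ u-blocks is an inner-ordered partition of $[l]$ into $t$ blocks of size at most two; by Lemma~\ref{lem:composition} this contributes the factor $l!\binom{t}{l-t}$ once the u-blocks are ordered. Second, the asterisked content and the block order. I will encode each block's asterisked part by an exponential generating function in the asterisked labels: an a-block contributes $\frac{x}{1-x}$ (a nonempty ordered list), while a u-block contributes $\frac{1}{(1-x)^2}$, since by (a) its asterisked elements split into an ordered prefix and an ordered suffix of the unasterisked run. Property (b) forces the first $u$ positions to be a-blocks, so choosing which of the remaining $v+r$ positions carry the $t$ u-blocks gives $\binom{v+r}{t}$, and extracting $[x^r]$ from the product of the block generating functions gives $r!\binom{2t+r-1}{t-u-v}$. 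Ignoring (c), the number of admissible partitions with exactly $t$ u-blocks is therefore
\[
M_t=l!\,r!\binom{t}{l-t}\binom{v+r}{t}\binom{2t+r-1}{t-u-v}.
\]

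Third, I will impose (c) by marking the distinguished element $(l+r)^*$ and requiring that its block be among the first $u+v$. I will compute this by summing over the position and type of the marked block, replacing the generating function of that one block by its pointed version (a special a-block contributes $\frac{1}{(1-x)^2}$, a special u-block contributes $\frac{2}{(1-x)^3}$), and splitting into the case where $(l+r)^*$ lies in one of the first $u$ forced a-blocks and the case where it lies in positions $u+1,\dots,u+v$. This yields $C(l,r,u,v)=\sum_t M_t\,P_t$, where $P_t$ is the conditional probability that $(l+r)^*$ falls in the first $u+v$ blocks. The final step is bookkeeping: the elementary identities $\binom{v+r}{t}=\frac{v+r}{t}\binom{r+v-1}{t-1}$ and $\binom{2t+r-1}{t-u-v}=\frac{t+r+u+v}{2t+r}\binom{2t+r}{t-u-v}$ rewrite $M_t$ in terms of the binomials appearing in \eqref{equ:C(l,r,u,v)}, and after multiplying by $P_t$ the leftover rational factor collapses to exactly $f(t,r,u,v)$ in \eqref{equ:f(t,r,u,v)}.

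I expect the main obstacle to be this last stage, the treatment of (c). The asterisked generating functions are position-agnostic within the mixed region, whereas (c) is a genuinely positional constraint, so the pointing argument must be organized by the position of the marked block and by its type, and the two block-type subcases must recombine into the single rational function $f$. Verifying that the extra factors from the pointed generating functions and from the identities above cancel precisely to produce $\frac{ut+(u+v)(t+r+v)}{t(2t+r)}$, rather than some spurious denominator, is the delicate point; checking small instances (for example $l=2,\,r=2,\,u=v=1$, where $C=8$) is a useful guard against off-by-one errors in the binomial constants.
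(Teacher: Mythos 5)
Your proposal is correct, and I verified that your intermediate count
\[
M_t=l!\,r!\binom{t}{l-t}\binom{v+r}{t}\binom{2t+r-1}{t-u-v}
\]
agrees with what the paper's argument yields before condition (c) is imposed, and that your pointed-block case analysis (the $u$ forced a-block positions contributing $\binom{2t+r}{t-u-v}$, plus the $v$ middle positions split by block type into $v\binom{v+r-1}{t}\binom{2t+r}{t-u-v}$ and $2v\binom{v+r-1}{t-1}\binom{2t+r-1}{t-u-v-1}$) recombines, after the binomial absorption identities you cite, into exactly $r\,f(t,r,u,v)$ against the prefactor $(r-1)!$; your check value $C(2,2,1,1)=8$ is also right. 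Your route shares the paper's skeleton — classify by the number $t$ of blocks meeting $[l]$, invoke Lemma \ref{lem:composition} for the unasterisked content, and case-split on where $(l+r)^*$ lands — but it differs genuinely in the middle stage: the paper introduces an auxiliary parameter $s$ (the number of asterisked elements lying in blocks that also contain unasterisked ones), counts via Lemmas \ref{lem:composition} and \ref{lem:order-composition} plus a composition count for the remaining $r-s$ elements, and then eliminates $s$ using the Riordan identity \eqref{equ:n+p choose m}; your per-block generating functions $\frac{x}{1-x}$ and $\frac{1}{(1-x)^2}$ perform that summation over $s$ automatically, and they also handle uniformly the boundary case $u=0$, $t=r+v$ (no a-blocks at all), which the paper is forced to treat as a separate subcase \eqref{equ:p=t}. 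The one point where your write-up is only a plan rather than a proof is the final cancellation producing $f(t,r,u,v)$, but the computation you outline does go through as stated, so what remains is routine algebra rather than a missing idea.
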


\begin{proof}
	To construct such a partition under consideration with $u+v+r$ blocks, assume that $l$ unasterisked vertices are contained by exactly $t$ blocks, and these $t$ blocks contain $s$ asterisked vertices. Then by the properties (a) and (b), it is easy to see that $\lceil \frac{l}{2}\rceil\leq t\leq\min\{l,r+v\}$ and $0\leq s\leq t-u-v$. We denote $p=u+v+r$ for convenience. A dual-ordered partition  $\pi$ can be constructed by the following four steps:
	
	\begin{enumerate}[(1)]
    	\item First construct an inner-ordered $t$-partitions $\pi_1$ of $l$ unasterisked vertices such that each block contains at most two vertices to satisfy the property (a). According to Lemma \ref{lem:composition}, there are $\frac{l!}{t!}\binom{t}{l-t}$ ways.
    	
        \item Insert $s$ unlabelled asterisked vertices into $t$ blocks of $\pi_1$. According to Lemma \ref{lem:order-composition}, there are $\binom{2t+s-1}{s}$ ways to do this. 
        
        \item Divide the remaining $r-s$ unlabelled asterisked vertices into $(p-t)$ blocks to construct a $(p-t)$-partition $\pi_2$. The number of ways to do this is $\frac{1}{(p-t)!}\binom{r-s-1}{p-t-1}$.  
    	
    	\item List the $r$ asterisked vertices. Assume that the first $s$ asterisked vertices are just the ones inserted into the $t$ blocks of $\pi_1$. For each permutation of all $p$ blocks, denote $U$-blocks and $R$-blocks the first $u$ blocks and last $r$ blocks, respectively, let $V$-blocks denote the $v$ blocks between $U$-blocks and $R$-blocks. To meet properties (b) and (c), according to the position of the vertex $(l+r)^*$, we need to discuss the following two cases. 
    	\begin{enumerate}[{Case} 1.]
    	    \item If the vertex $(l+r)^*$ lies in one of the first $s$ positions, then there are $s(r-1)!$ ways to list the asterisked vertices, and the block containing $(l+r)^*$ must contain unasterisked vertices. It is easy to see that $(l+r)^*$ must lie in $V$-blocks of $\pi$. So $\pi$ can be constructed by choosing $u$ blocks from $p-t$ blocks of $\pi_2$ and linearly ordering, then taking the block containing $(l+r)^*$ as one of the $V$-blocks, and permuting the remaining $r+v-1$ blocks.  Thus there are   
    	    \[
    	    v\binom{p-t}{u}u!(r+v-1)!
    	    \]
    	    ways to produce $\pi$.
    	    \item If the vertex $(l+r)^*$ lies in one of the last $r-s$ positions, then there are $(r-s)(r-1)!$ ways to list the asterisked vertices, and the block containing $(l+r)^*$ has no unasterisked vertices. If we take the block containing $(l+r)^*$ as one of the $U$-blocks, there are
    	    \[
    	    u\binom{p-t-1}{u-1}(u-1)!(r+v)!
    	    \] 
    	    ways to produce $\pi$. Otherwise, the block containing $(l+r)^*$ should be one of the $V$-blocks, then the number of ways is \[v\binom{p-t-1}{u}u!(r+v-1)!.\]
    	\end{enumerate}
    \end{enumerate}
	
	Summarizing, we have
	\begin{align}
	\notag &C(l,r,u,v)\\
	\notag =&\sum_{t=\lceil l/2\rceil}^{\textup{min}\{l,r+v\}}\frac{l!(r-1)!}{t!(p-t)!}\binom{t}{l-t}\sum_{s=0}^{t-u-v}\binom{2t+s-1}{s}\binom{r-s-1}{p-t-1}\left(sv\binom{p-t}{u}u!(r+v-1)!\right.\\
	&\quad \left.+(r-s)\left(u\binom{p-t-1}{u-1}(u-1)!(r+v)!+v\binom{p-t-1}{u}u!(r+v-1)!\right)\right).\label{equ:all p}
	\end{align}
	
	For the case of $u>0$, we have $p-t\geq u>0$. Then \eqref{equ:all p} can be simplified as
	\begin{align}\label{equ:p neq t}
	    l!r!\sum_{t=1}^{\textup{min}\{l,r+v\}}\binom{t}{l-t}\binom{r+v-1}{t-1}\sum_{s=0}^{t-u-v}\binom{2t+s-1}{s}\binom{r-s-1}{p-t-1}g(s,t,r,u,v),
	\end{align}
	where
	\[g(s,t,r,u,v)=\frac{sv}{tr}+\frac{(r-s)\left(u(r+v)+v(r+v-t)\right)}{(p-t)tr}.\]
	Applying the following identity \cite[P8 (3b)]{riordan-combinatorial-1968}
	\begin{equation}\label{equ:n+p choose m}
	    \sum_{s=0}^{m}\binom{q+s-1}{s}\binom{n-s}{m-s}=\binom{n+q}{m}, \qquad n, m\geq 0, q\geq 1,
	\end{equation}
	we have 
	\begin{align}
		&\sum_{s=0}^{m}\binom{q+s-1}{s}\binom{n-s}{m-s}s=q\binom{n+q}{m-1},\\
		&\sum_{s=0}^{m}\binom{q+s-1}{s}\binom{n-s}{m-s}(n+1-s)=(n-m+1)\binom{n+q+1}{m}.
	\end{align}
	Setting $q=2t, n=r-1, m=t-u-v$ in the above  two identities and substituting them into the sums involving $s$ in \eqref{equ:p neq t}, it turns to
	\begin{align}
	\notag&l!r!\sum_{t=1}^{\textup{min}\{l,r+v\}}\binom{t}{l-t}\binom{r+v-1}{t-1}\left(\frac{2v}{r}\binom{2t+r-1}{p+t}\right.\\
	\notag&\quad\left.+\frac{u(r+v)+v(r+v-t)}{tr}\binom{2t+r}{p+t}\right)\\
	=&l!r!\sum_{t=1}^{\textup{min}\{l,r+v\}}\binom{t}{l-t}\binom{r+v-1}{t-1}\binom{2t+r}{t-u-v}\frac{ut+(u+v)(t+r+v)}{t(2t+r)}.
	\end{align}
	
	For the case of $u=0$, we divide the summation of \eqref{equ:all p} into two parts as $\lceil l/2\rceil\leq t<r+v=p$ and $t=r+v=p$. As in the case of $u>0$, the part for $\lceil l/2\rceil\leq t<p$ can be simplified to 
	\begin{align}
	    l!r!\sum_{t=1}^{\textup{min}\{l,r+v-1\}}\binom{t}{l-t}\binom{r+v-1}{t-1}\binom{2t+r}{t-v}\frac{v(t+r+v)}{t(2t+r)}.\label{equ:p neq t simple}
	\end{align}
	The part for $t=r+v=p$ equals
	\begin{align}
	    \notag&\frac{l!(r-1)!}{(r+v)!}\binom{r+v}{l-r-v}\binom{3r+2v-1}{r}rv\binom{0}{u}(r+v-1)!\\[5pt]
	    =&l!r!\binom{r+v}{l-r-v}\binom{3r+2v}{r}\frac{2v}{3r+2v},\label{equ:p=t}
	\end{align}
	At last, summing \eqref{equ:p neq t simple} and \eqref{equ:p=t} completes the proof.
\end{proof}

Now we are ready to give the uniform formula for saturated extended 2-regular simple stacks with any of the six primary component types.

\begin{thm}\label{thm:P-V^A}
    Denote $\mathcal{P}(n,k;k_1,d,I_1,I_2,J_1,J_2)$ the set of extended saturated 2-regular simple stacks on $[n]$ with $k$ arcs and {$\mathcal{A}$ the primary component satisfying that}
	\begin{enumerate}[(1)]
	    \item $\mathcal{A}$ contains $k_1$ arcs and $\sum_{v\in \{1,n\}}(\deg v-1)=d$.
	    \item $\mathcal{A}$ splits $[n]$ into disjoint intervals, on which there are $I_1,I_2,J_1,J_2$ substructures of type $T_1,T_2,T_3$ and $T_4$, respectively.
	\end{enumerate}
	Let $P(n,k;k_1,d,I_1,I_2,J_1,J_2)=|\mathcal{P}(n,k;k_1,d,I_1,I_2,J_1,J_2)|$, then
	\begin{equation}\label{equ:P-V^A}
		P(n,k;k_1,d,I_1,I_2,J_1,J_2)=\sum_{i=0}^{I_1}\sum_{j=0}^{J_1}\binom{I_1}{i}\binom{J_1}{j}\frac{C(l,r,j+J_2,i+I_2)}{l!r!},
	\end{equation}
	where $l=n+d-2k$, $r=k-k_1$, and $C(l,r,u,v)$ is defined by \eqref{equ:C(l,r,u,v)}.
\end{thm}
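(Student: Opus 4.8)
The plan is to read off the count from the algorithm {\tt eSTF} and Lemma~\ref{lem:R-skd2F-s}, turning the enumeration of the stacks in $\mathcal{P}(n,k;k_1,d,I_1,I_2,J_1,J_2)$ into an enumeration of the small forests they produce. By Lemma~\ref{lem:R-skd2F-s}, such a forest is determined by the primary component $\mathcal{A}$ (which fixes the roots $1,\ldots,k_1+1$ and the positions of the asterisked leaves $(n+d+1)^*,\ldots,(n+d-k_1+2)^*$) together with the fibers contributed by the substructures occupying the intervals that $\mathcal{A}$ cuts out of $[n]$. So my first step is to record the exact dictionary between the four admissible substructure types and the four fiber types: a substructure of type $T_1$ (resp.\ $T_2,T_3,T_4$) yields a fiber of type $F_1$ (resp.\ $F_2,F_3,F_4$), where $F_3,F_4$ carry no unasterisked leaf because $\widehat{T}$ has no visible vertex (recall the last row of Table~\ref{tab:relation}), and where $F_2,F_4$ are nonempty because $T_2,T_4$ are.

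Since $T_1=T+\varepsilon$ and $T_3=\widehat{T}+\varepsilon$ may be empty, the second step is to split the count by which of these intervals are actually filled. I would sum over the number $i$ of the $I_1$ type-$T_1$ intervals that are nonempty and the number $j$ of the $J_1$ type-$T_3$ intervals that are nonempty, choosing which of them in $\binom{I_1}{i}\binom{J_1}{j}$ ways. An empty interval produces an empty fiber and so disappears from the partition below, while each nonempty $T_1$ now behaves exactly like a $T_2$ and each nonempty $T_3$ like a $T_4$. After this reduction there are precisely $v:=i+I_2$ fibers that may carry unasterisked leaves and $u:=j+J_2$ fibers that carry none.

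The heart of the argument, and the step I expect to be the main obstacle, is to identify the remaining labeled-forest count with the quantity $C(l,r,u,v)$ of Lemma~\ref{lem:p-dual-ordered partition}, where $l=n+d-2k$ is the number of unasterisked leaves and $r=k-k_1$ is the number of non-primary asterisked leaves. The idea is to view the nonempty fibers as the blocks of a dual-ordered partition of $[l]\cup[l+1,l+r]^*$: the inner order of a block records the left-to-right order of the leaves of the corresponding small tree, and the block order records the roles forced by $\mathcal{A}$, namely the $u$ no-unasterisked fibers, then the $v$ fibers admitting unasterisked leaves, then the $r$ non-primary fibers. I then have to verify that the defining conditions of $C$ are exactly the images of the forest properties in Lemma~\ref{lem:R-skd2F-s}: condition (a) (each block has at most two, necessarily consecutive, unasterisked elements) is property (P1), coming from saturation with $m=2$; condition (b) (the first $u$ blocks have no unasterisked element) encodes that those $u$ fibers stem from $\widehat{T}$-type substructures; and condition (c) (the largest asterisked element $(l+r)^*$ lies in one of the first $u+v$ blocks) is precisely property (P3), which forces $(n+d-k_1+1)^*$ to be a leaf of one of the primary small trees. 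Checking that this role assignment is well defined, bijective, and exhausts every admissible forest (and confirming the implicit consistency $I_1+I_2+J_1+J_2=k_1+1$ among the interval types) is the delicate bookkeeping that makes this step the crux.

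Finally, I would pass from the fully labeled, dual-ordered count $C(l,r,u,v)$ back to the number of stacks. Because a stack carries no free labels---its vertices are fixed by their positions---the linear orderings of $[l]$ and of $[l+1,l+r]^*$ that were introduced in the proof of Lemma~\ref{lem:p-dual-ordered partition} must be quotiented out, and each resulting shape is in bijection with exactly one stack; this is exactly the division by $l!\,r!$. Combining this normalization with the binomial sums over $i$ and $j$ from the second step yields
\[
P(n,k;k_1,d,I_1,I_2,J_1,J_2)=\sum_{i=0}^{I_1}\sum_{j=0}^{J_1}\binom{I_1}{i}\binom{J_1}{j}\frac{C(l,r,j+J_2,i+I_2)}{l!\,r!},
\]
with $l=n+d-2k$ and $r=k-k_1$, which is the claimed formula.
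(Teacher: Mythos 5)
Your proposal follows the paper's own proof essentially verbatim: the same reduction of stacks to small forests via the algorithm {\tt eSTF} and Lemma \ref{lem:R-skd2F-s}, the same sum over which of the $T_1$- and $T_3$-intervals are nonempty (giving the factor $\binom{I_1}{i}\binom{J_1}{j}$), the same identification of the remaining fiber data with the dual-ordered partitions counted by $C(l,r,j+J_2,i+I_2)$ in Lemma \ref{lem:p-dual-ordered partition}, and the same normalization by $l!\,r!$ to strip the auxiliary labels. The only blemish is your parenthetical consistency check $I_1+I_2+J_1+J_2=k_1+1$, which is not true in general (e.g.\ for type $\mathcal{A}_4$ one has $k_1=1$ but a single interval, since some of the $k_1+1$ primary small trees have fibers consisting only of determined asterisked vertices) and is not needed for the argument.
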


\begin{proof}
	Note that the total number of vertices in the intervals is $l=n+d-2k$.
	For the trivial case $k=k_1$, it is obvious that $J_2=0$, and the $J_1$ intervals of type $T_3$ must be empty. Suppose that there are $i$ nonempty intervals in $I_1$ intervals of type $T_1$, so that $l$ vertices are distributed in $I_2+i$ intervals and these intervals can contain only one or two isolated vertices. Choose $l-I_2-i$ intervals from $I_2+i$ intervals to place two vertices. Therefore
	\begin{equation}\label{equ:extended saturated structures k=k_1}
		P(n,k_1;k_1,d,I_1,I_2,J_1,J_2)=\begin{cases}
			0,&J_2>0,\\
			\displaystyle\sum_{i=0}^{I_1}\binom{I_1}{i}\binom{I_2+i}{l-I_2-i},&J_2=0.
		\end{cases}
	\end{equation}
	
	For the case of $k>k_1$, since $\mathcal{P}(n,k;k_1,d,I_1,I_2,J_1,J_2)\subseteq \mathcal{R}(n,k;k_1,d)$, the small forests corresponding to $\mathcal{P}(n,k;k_1,d,I_1,I_2,J_1,J_2)$ must satisfy conditions (P1)--(P3) in Lemma \ref{lem:R-skd2F-s}. Additionally, from the second restriction for the primary component $\mathcal{A}$, the small forests corresponding to $\mathcal{P}(n,k;k_1,d,I_1,I_2,J_1,J_2)$ should also satisfy the following fourth condition.
	\begin{enumerate}[(P4)]
		\item Ignore the determined vertices $(n+d+1)^*,\ldots,(n+d-k_1+2)^*$. The fiber type of the small trees with roots in $[k_1+1]$ are determined, where there are $I_1,I_2,J_1$, and $J_2$ fibers of type $F_1,F_2,F_3$, and $F_4$, respectively.
	\end{enumerate}  	
	We take four steps to construct the small forests on $[n+d-k+1]$ satisfying those four restrictions.
	\begin{enumerate}[(1)]
		\item Assume there are $i,j$ nonempty fibers in the $I_1,J_1$ fibers of type $F_1$ and $F_3$, respectively. Choose these fibers in $\binom{I_1}{i}\binom{J_1}{j}$ ways.
		\item From the condition (P2) in Lemma \ref{lem:R-skd2F-s}, we have determined $k_1+1$ root labels of small trees corresponding to the primary component $\mathcal{A}$. Select the remaining $k-k_1$ root labels from $[k_1+2,n+d-k+1]$ in $\binom{l+r}{r}$ ways.
		\item Ignore vertices $(n+d+1)^*,\ldots,(n+d-k_1+2)^*$, there are $n+d-2k$ unasterisked vertices and $k-k_1$ asterisked vertices distributed in $r+i+j+I_2+J_2$ saturated fibers. Construct a dual-ordered $(r+i+j+I_2+J_2)$-partition $\pi$ of these vertices with the following  {three restrictions:}
		\begin{enumerate}[(a)]
		    \item Each block contains no unasterisked element, or one unasterisked element, or two adjacent unasterisked elements. 
		    \item The first $j+J_2$ blocks contain no unasterisked element. 
		    \item The element $(n+d-k_1+1)^*$ is contained in one of the first $i+j+I_2+J_2$ blocks.
		\end{enumerate}
		According to Lemma \ref{lem:p-dual-ordered partition}, we have $C(l,r,j+J_2,i+I_2)$ ways to do this.
		\item Denote $\mathcal{R}_I$ and $\mathcal{R}_J$ the sets of the roots of $i+I_2$ fibers of type $F_2$ and $j+J_2$ fibers of type $F_4$, respectively. Let $L_I$ and $L_J$ be the increasing list of the roots in $\mathcal{R}_I$ and $\mathcal{R}_J$, respectively. Allocate $r+i+j+I_2+J_2$ blocks of $\pi$ to the roots in $L_J, L_I$ and $(k_1+2,\ldots, k+1)$ orderly. 
	\end{enumerate}
	It derives that
	\begin{equation}
		(l+r)!P(n,k;k_1,d,I_1,I_2,J_1,J_2)=\sum_{i=0}^{I_1}\sum_{j=0}^{J_1}\binom{I_1}{i}\binom{J_1}{j}\binom{l+r}{r}C(l,r,j+J_2,i+I_2).
	\end{equation}	
	Thus equation \eqref{equ:P-V^A} follows. It is easy to prove that equation \eqref{equ:P-V^A} equals 0 when $k<k_1$ and reduces to equation \eqref{equ:extended saturated structures k=k_1} when $k=k_1$. The theorem therefore holds for all nonnegative integers $k$.
\end{proof}

In fact, the idea of Theorem \ref{thm:P-V^A} is also applicable to enumerate saturated 2-regular simple structures. Note that when $n\geq 1$, a saturated 2-regular simple stack is just a structure of type $T_2$. Taking the primary component to be an empty graph and $k_1,d$ to be 0 in Theorem \ref{thm:P-V^A}, we immediately get
\begin{align*}
	R_s(n,k;2)=&P(n,k;0,0,0,1,0,0),\\
	=&\dfrac{1}{k+1}\sum_{t=1}^{k+1}\binom{k+1}{t}\binom{2t+k-1}{t-1}\binom{t}{n-2k-t},
\end{align*}
which is the same as \eqref{equ:R-s2}.

\section{Enumeration of saturated extended 2-regular simple stacks}\label{sec:saturated-extended-stacks}

In this section, we aim to study the enumeration of the saturated extended 2-regular simple stacks on $[n]$ with $k$ arcs based on Theorem \ref{thm:P-V^A} for such stacks with given primary component types. As consequences, we also obtain enumeration formulas for optimal, 1-saturated, and 2-saturated extended 2-regular simple stacks. 

First, we give the detailed proof of Theorem \ref{thm:ELO(n,k)}, which is a case-by-case application of Theorem \ref{thm:P-V^A} on the six primary component types shown in Table \ref{tab:extended saturated structures}.

\begin{proof}[Proof of \textbf{Theorem \ref{thm:ELO(n,k)}}]
	Denote the number of the saturated extended 2-regular simple stacks on $[n]$ with $k$ arcs and primary component type being $\mathcal{A}_i$ in Table \ref{tab:extended saturated structures} by $s_i(n,k)$. 
	Note that types $\mathcal{A}_i$ and $\mathcal{A}_i'$ ($i=1,2,3$) are symmetric, we will consider $\mathcal{A}_i$ only, therefore,
	\begin{equation}\label{equ:ELO2si}
		ELO(n,k)=2(s_1(n,k)+s_2(n,k)+s_3(n,k))+s_4(n,k)+s_5(n,k)+s_6(n,k).
	\end{equation}
	
	For saturated extended 2-regular simple stacks with primary component of type $\mathcal{A}_1$, the three intervals splitted by the primary component are of type $T_2$, $T_1$ and $T_6$, respectively. If $T_6$ is just $T_3$, it is the case $k_1=2, d=0, I_1=I_2=J_1=1, J_2=0$ in Theorem \ref{thm:P-V^A}. If $T_6$ is $T_3$ followed by an isolated vertex, we delete the isolated vertex and the length of the structure becomes $n-1$. It corresponds to the case $k_1=2, d=0, I_1=I_2=J_1=1, J_2=0$ in Theorem \ref{thm:P-V^A}. Therefore
	\begin{align*}
		&s_1(n,k)\\
		&\quad=P(n, k; 2, 0, 1, 1, 1, 0) + P(n - 1, k; 2, 0, 1, 1, 1, 0)\\
		&\quad=\sum\limits_{t=1}^{k+1}\left(\binom{t}{h}+\binom{t}{h-1}\right)\sum\limits_{i=0}^{1}\sum\limits_{j=0}^{1}\binom{k+i-2}{t-1}\binom{2t+k-2}{t-i-j-1}f(t,k-2,j,1+i),
	\end{align*}
	where $h=n-2k-t$, and $f(t,r,u,v)$ is given by \eqref{equ:f(t,r,u,v)}.
	
	For the case of primary component type being $\mathcal{A}_2$, the two intervals are of type $T_2$ and $T_6$, respectively. Following similar discussions on the interval of type $T_6$, we have
	\begin{align*}		
		s_2(n,k)=&P(n, k; 2, 1, 0, 1, 1, 0) + P(n - 1, k; 2, 1, 0, 1, 1, 0)\\
		=&\frac{1}{k-1}\sum_{t=1}^{k+1}\binom{k-1}{t}\binom{2t+k-1}{t-1}\left(\binom{t}{h+1}+\binom{t}{h}\right).
	\end{align*}
	
	For the case of primary component type being $\mathcal{A}_3$, the four intervals are of type $T_2,T_1,T_3$, and $T_7$, respectively. For the interval of type $T_7$, if it is just $T_4$, it is the case $k_1=3, d=1, I_1=I_2=J_1=J_2=1$ in Theorem \ref{thm:P-V^A}. Otherwise, if it is $T_3$ followed by an isolated vertex. We obtain a structure with $n-1$ vertices by deleting the isolated vertex. It corresponds to the case $k_1=3, d=1, I_1=I_2=1, J_1=2, J_2=0$ in Theorem \ref{thm:P-V^A}. Therefore, 
	\begin{align*}
		s_3(n,k)=&P(n, k; 3, 1, 1, 1, 1, 1) + P(n - 1, k; 3, 1, 1, 1, 2, 0)\\	
		=&\sum_{t=1}^{k+1}\sum_{i=0}^1\sum_{j=0}^{2}\binom{k+i-3}{t-1}\left(\binom{t}{h+1}\binom{1}{j}\binom{2t+k-3}{t-i-j-2}f(t,k-3,1+j,1+i)\right.\\
		&+\left.\binom{t}{h}\binom{2}{j}\binom{2t+k-3}{t-i-j-1}f(t,k-3,j,1+i)\right).
	\end{align*}
	
	Similarly, we also have
	\begin{align*}
		s_4(n,k)=&P(n, k; 1, 0, 0, 0, 0, 1)\\
		=&\sum_{t=1}^{n-2k}\frac{1}{t}\binom{k-2}{t-1}\binom{t}{h}\binom{2t+k-1}{t-1},\\
		s_5(n,k)=&P(n, k; 3, 2, 1, 2, 0, 0)\\
		=&\sum_{t=1}^{k+1}\sum_{i=0}^1\frac{2+i}{t}\binom{k-2+i}{t-1}\binom{t}{h+2}\binom{2t+k-4}{t-2-i},\\
		s_6(n,k)=&P(n, k; 4, 2, 3, 2, 0, 0)\\
		=&\sum_{t=1}^{k+1}\sum_{i=0}^3\frac{2+i}{t}\binom{3}{i}\binom{k-3+i}{t-1}\binom{t}{h+2}\binom{2t+k-5}{t-2-i}.	
	\end{align*}
	Substituting the above expressions for $s_i(n,k), i=1,\ldots,6$ into \eqref{equ:ELO2si}	and simplifying, it leads to \eqref{equ:ELOnk} which completes the proof.
\end{proof}

Let $ELO_k(n)$ denote the number of $k$-saturated extended 2-regular simple stacks on $[n]$. Due to Guo et al. \cite[Lemma 4]{guo-number-2022}, for $n\geq 3$, it holds that
\begin{equation}\label{equ:ELO}
	ELO_k(n)=ELO\left(n,\left\lfloor \frac{n}{2}\right\rfloor -k\right).
\end{equation}

According to \eqref{equ:ELO}, we can obtain the enumeration formula for $k$-saturated extended 2-regular simple stacks from Theorem \ref{thm:ELO(n,k)}. For example, when $k=\left\lfloor \frac{n}{2}\right\rfloor$, \eqref{equ:ELOnk} reduces to the result \eqref{equ:ELO-0} for optimal structures due to Guo et al. \cite{guo-number-2022}. Moreover, for $n\geq 7$, substituting $k=\left\lfloor \frac{n}{2}\right\rfloor -1$ into \eqref{equ:ELOnk}, we obtain the enumeration formula for 1-saturated extended 2-regular simple stacks.
\[ELO_1(n)=\left\{\begin{aligned}
	&\frac{1}{384} n^{5}-\frac{1}{128} n^{4}-\frac{1}{24} n^{3}+\frac{5}{32} n^{2}+\frac{7}{8} n-3,	&	&\textup{if}\ n\ \textup{is even},\\[5pt]
	&\begin{aligned}
		&\frac{1}{23040} n^{7}-\frac{1}{7680} n^{6}-\frac{29}{23040} n^{5}-\frac{23}{1536} n^{4}+\frac{2599}{23040} n^{3}\\[5pt]
		&\quad+\frac{7481}{7680} n^{2}-\frac{46937}{7680} n+\frac{533}{512},
	\end{aligned}	&	&\textup{if}\ n\ \textup{is odd}.
\end{aligned}\right.\]
For $n\geq 8$, substituting $k=\left\lfloor \frac{n}{2}\right\rfloor -2$ into \eqref{equ:ELOnk}, we obtain the enumeration formula for 2-saturated extended 2-regular simple stacks.
\[ELO_2(n)=\left\{\begin{aligned}
	&\begin{aligned}
	    &\frac{1}{2211840} n^{9}-\frac{1}{737280} n^{8}-\frac{1}{46080} n^{7}-\frac{23}{30720} n^{6}+\frac{223}{46080} n^{5}\\[5pt]
	    &\quad+\frac{1397}{15360} n^{4}-\frac{10049}{17280} n^{3}-\frac{6413}{2880} n^{2}+\frac{23}{2} n+27,
	\end{aligned}
	&&\textup{if}\ n\ \textup{is even},\\[5pt]
	&\begin{aligned}
	    &\frac{1}{309657600} n^{11}-\frac{1}{103219200} n^{10}-\frac{1}{4128768} n^{9}
	    -\frac{61}{4128768} n^{8}\\[5pt]
	    &\quad+\frac{5323}{51609600} n^{7}+\frac{21673}{7372800} n^{6}-\frac{558619}{30965760} n^{5}-\frac{143243}{688128} n^{4}\\[5pt]
	    &\quad+\frac{75730687}{103219200} n^{3}+\frac{361742593}{34406400} n^{2}-\frac{32607521}{2293760} n-\frac{14339839}{65536},
	\end{aligned}
	&&\textup{if}\ n\ \textup{is odd}.
\end{aligned}\right.\]

We list some values of $ELO(n,k)$ in Table \ref{tab:ELO}. Note that Theorem \ref{thm:ELO(n,k)} only holds for $k\geq 3$, $ELO(n,1)$ and $ELO(n,2)$ can be obtained by straightforward exhaustive enumeration. 

\begin{table}[H]
	\footnotesize			
	\centering
	\caption{Values of $ELO(n,k)$ for $3\leq n\leq 24$.}\label{tab:ELO}
	\resizebox{\textwidth}{!}{\begin{tabular}{ccccccccccccccc}
			\toprule
			\diagbox[dir=SE]{$k$}{$n$}&	3&	4&	5&	6&	7&	8&	9&	10&	11&	12&	13&	14&	15&	16\\\hline
			1&	1&	&	&	&	&	&	&	&	&	&	&	&	&	\\
			2&	&	2&	7&	9&	8&	6&	2&	&	&	&	&	&	&	\\
			3&	&	&	&	3&	18&	46&	73&	82&	70&	40&	10&	&	&	\\
			4&	&	&	&	&	&	5&	41&	162&	395&	666&	834&	799&	563&	251\\
			5&	&	&	&	&	&	&	&	7&	80&	444&	1534&	3667&	6449&	8690\\
			6&	&	&	&	&	&	&	&	&	&	9&	139&	1026&	4728&	15151\\
			7&	&	&	&	&	&	&	&	&	&	&	&	11&	222&	2099\\
			8&	&	&	&	&	&	&	&	&	&	&	&	&	&	13\\
			sum&	1&	2&	7&	12&	26&	57&	116&	251&	545&	1159&	2517&	5503&	11962&	26204\\			
			\bottomrule				
	\end{tabular}}	
	\resizebox{\textwidth}{2.75cm}{\begin{tabular}{ccccccccc}
			\diagbox[dir=SE]{$k$}{$n$}&	17&	18&	19&	20&	21&	22&	23&	24\\\hline
			4&	50&	&	&	&	&	&	&	\\
			5&	9146&	7403&	4312&	1570&	260&	&	&	\\
			6&	35820&	64919&	92557&	105168&	94660&	65265&	32109&	9875\\
			7&	12362&	50796&	154746&	363026&	673021&	1003604&	1214930&	1191281\\
			8&	333&	3921&	28613&	145817&	553028&	1623141&	3784746&	7141955\\
			9&	&	15&	476&	6827&	60299&	371629&	1708309&	6100976\\
			10&	&	&	&	17&	655&	11239&	117960&	862174\\
			11&	&	&	&	&	&	19&	874&	17676\\
			12&	&	&	&	&	&	&	&	21\\
			sum&	57711&	127054&	280704&	622425&	1381923&	3074897&	6858928&	15323958\\
			\bottomrule				
	\end{tabular}}
\end{table}	

At last, we illustrate the bivariate sequence $ELO(n,k)$ for some $n$ and $k$ in Figure \ref{fig:visualization}. The Maple source codes of this paper can be found at\\
\href{https://github.com/xiaoshuangxiaoshuang/SE2RSS}{https://github.com/xiaoshuangxiaoshuang/SE2RSS}.

\begin{figure}[H]
	\pgfplotsset{
		every axis/.append style={
			font=\small,
			thin,
			tick style={ultra thin}},
	} 
	\centering
	\begin{tikzpicture}
			\begin{axis}[
				xmin=4, xmax=14, 
				title={},
				xlabel={$k$},
				ylabel={$ELO(n,k)$},
				width=.9\textwidth,
				height=.4\textwidth,
				minor tick num=1,
				]
				\addplot [orange,mark=square,mark size=1.3pt] table {
				x	y
				4	0
				5	0
				6	65265
				7	1003604
				8	1623141
				9	371629
				10	11239
				11	19
				12	0
				13	0
				14	0
			};
				\addlegendentry{$n=22$}
				\addplot [green,mark=o,mark size=1.5pt] table {
				x	y
				4	0
				5	0
				6	9875
				7	1191281
				8	7141955
				9	6100976
				10	862174
				11	17676
				12	21
				13	0
				14	0
			};
				\addlegendentry{$n=24$}
				\addplot [magenta,mark=triangle,mark size=1.7pt] table {
				x	y
				4	0
				5	0
				6	0
				7	553448
				8	14098918
				9	40356435
				10	19996555
				11	1853580
				12	26764
				13	23
				14	0
			};
				\addlegendentry{$n=26$}
			\end{axis}
	\end{tikzpicture}
	
	\begin{tikzpicture}
			\begin{axis}[
				xmin=43, xmax=60, 
				title={},
				xlabel={$k$},
				ylabel={$ELO(n,k)$},
				width=.9\textwidth,
				height=.4\textwidth,
				minor tick num=1,
				]
				\addplot [orange,mark=square,mark size=1.3pt] table {
				x	y
				43	65996300000000000000000000000000000000000000000 
				44	1403850000000000000000000000000000000000000000000 
				45	16832900000000000000000000000000000000000000000000 
				46	120691000000000000000000000000000000000000000000000 
				47	541068000000000000000000000000000000000000000000000 
				48	1569880000000000000000000000000000000000000000000000 
				49	3029040000000000000000000000000000000000000000000000 
				50	3971300000000000000000000000000000000000000000000000 
				51	3599380000000000000000000000000000000000000000000000 
				52	2286270000000000000000000000000000000000000000000000 
				53	1028650000000000000000000000000000000000000000000000 
				54	330480000000000000000000000000000000000000000000000 
				55	76245400000000000000000000000000000000000000000000 
				56	12675300000000000000000000000000000000000000000000 
				57	1520260000000000000000000000000000000000000000000 
				58	131423000000000000000000000000000000000000000000 
				59	8162220000000000000000000000000000000000000000 
				60	362125000000000000000000000000000000000000000 
			};
				\addlegendentry{$n=148$}
				\addplot [green,mark=o,mark size=1.5pt] table {
				x	y
				43	52131200000000000000000000000000000000000000000 
				44	1330000000000000000000000000000000000000000000000 
				45	18796500000000000000000000000000000000000000000000 
				46	156840000000000000000000000000000000000000000000000 
				47	810549000000000000000000000000000000000000000000000 
				48	2691770000000000000000000000000000000000000000000000 
				49	5912570000000000000000000000000000000000000000000000 
				50	8789300000000000000000000000000000000000000000000000 
				51	9005990000000000000000000000000000000000000000000000 
				52	6454380000000000000000000000000000000000000000000000 
				53	3272830000000000000000000000000000000000000000000000 
				54	1184600000000000000000000000000000000000000000000000 
				55	308022000000000000000000000000000000000000000000000 
				56	57778600000000000000000000000000000000000000000000 
				57	7834320000000000000000000000000000000000000000000 
				58	767771000000000000000000000000000000000000000000 
				59	54256200000000000000000000000000000000000000000 
				60	2751920000000000000000000000000000000000000000 
			};
				\addlegendentry{$n=149$}
				\addplot [magenta,mark=triangle,mark size=1.7pt] table {
				x	y
				43	38914700000000000000000000000000000000000000000 
				44	1196830000000000000000000000000000000000000000000 
				45	20007400000000000000000000000000000000000000000000 
				46	194770000000000000000000000000000000000000000000000 
				47	1162380000000000000000000000000000000000000000000000 
				48	4423500000000000000000000000000000000000000000000000 
				49	11069600000000000000000000000000000000000000000000000 
				50	18665000000000000000000000000000000000000000000000000 
				51	21623400000000000000000000000000000000000000000000000 
				52	17481700000000000000000000000000000000000000000000000 
				53	9985680000000000000000000000000000000000000000000000 
				54	4068930000000000000000000000000000000000000000000000 
				55	1191240000000000000000000000000000000000000000000000 
				56	251803000000000000000000000000000000000000000000000 
				57	38536500000000000000000000000000000000000000000000 
				58	4272930000000000000000000000000000000000000000000 
				59	342760000000000000000000000000000000000000000000 
				60	19818300000000000000000000000000000000000000000 
			};
				\addlegendentry{$n=150$}
			\end{axis}
	\end{tikzpicture}

    \begin{tikzpicture}
		\begin{axis}[
			xmin=90, xmax=115, 
			title={},
			xlabel={$k$},
			ylabel={$ELO(n,k)$},
			width=.9\textwidth,
			height=.4\textwidth,
			minor tick num=1,
			]
			\addplot [orange,mark=square,mark size=1.3pt] table {
			x	y
			90	358960000000000000000000000000000000000000000000000000000000000000000000000000000000000000000000000000 
			91	3395300000000000000000000000000000000000000000000000000000000000000000000000000000000000000000000000000 
			92	24801000000000000000000000000000000000000000000000000000000000000000000000000000000000000000000000000000 
			93	141580000000000000000000000000000000000000000000000000000000000000000000000000000000000000000000000000000 
			94	638230000000000000000000000000000000000000000000000000000000000000000000000000000000000000000000000000000 
			95	2293200000000000000000000000000000000000000000000000000000000000000000000000000000000000000000000000000000 
			96	6621500000000000000000000000000000000000000000000000000000000000000000000000000000000000000000000000000000 
			97	15476000000000000000000000000000000000000000000000000000000000000000000000000000000000000000000000000000000 
			98	29471000000000000000000000000000000000000000000000000000000000000000000000000000000000000000000000000000000 
			99	45990000000000000000000000000000000000000000000000000000000000000000000000000000000000000000000000000000000 
			100	59117000000000000000000000000000000000000000000000000000000000000000000000000000000000000000000000000000000 
			101	62884000000000000000000000000000000000000000000000000000000000000000000000000000000000000000000000000000000 
			102	55584000000000000000000000000000000000000000000000000000000000000000000000000000000000000000000000000000000 
			103	40974000000000000000000000000000000000000000000000000000000000000000000000000000000000000000000000000000000 
			104	25272000000000000000000000000000000000000000000000000000000000000000000000000000000000000000000000000000000 
			105	13079000000000000000000000000000000000000000000000000000000000000000000000000000000000000000000000000000000 
			106	5693400000000000000000000000000000000000000000000000000000000000000000000000000000000000000000000000000000 
			107	2089300000000000000000000000000000000000000000000000000000000000000000000000000000000000000000000000000000 
			108	647460000000000000000000000000000000000000000000000000000000000000000000000000000000000000000000000000000 
			109	169700000000000000000000000000000000000000000000000000000000000000000000000000000000000000000000000000000 
			110	37665000000000000000000000000000000000000000000000000000000000000000000000000000000000000000000000000000 
			111	7085300000000000000000000000000000000000000000000000000000000000000000000000000000000000000000000000000 
			112	1130400000000000000000000000000000000000000000000000000000000000000000000000000000000000000000000000000 
			113	153000000000000000000000000000000000000000000000000000000000000000000000000000000000000000000000000000 
			114	17571000000000000000000000000000000000000000000000000000000000000000000000000000000000000000000000000 
			115	1711600000000000000000000000000000000000000000000000000000000000000000000000000000000000000000000000 
		};
			\addlegendentry{$n=298$}
			\addplot [green,mark=o,mark size=1.5pt] table {
			x	y
			90	388470000000000000000000000000000000000000000000000000000000000000000000000000000000000000000000000000 
			91	3983900000000000000000000000000000000000000000000000000000000000000000000000000000000000000000000000000 
			92	31453000000000000000000000000000000000000000000000000000000000000000000000000000000000000000000000000000 
			93	193540000000000000000000000000000000000000000000000000000000000000000000000000000000000000000000000000000 
			94	938270000000000000000000000000000000000000000000000000000000000000000000000000000000000000000000000000000 
			95	3618200000000000000000000000000000000000000000000000000000000000000000000000000000000000000000000000000000 
			96	11192000000000000000000000000000000000000000000000000000000000000000000000000000000000000000000000000000000 
			97	27982000000000000000000000000000000000000000000000000000000000000000000000000000000000000000000000000000000 
			98	56921000000000000000000000000000000000000000000000000000000000000000000000000000000000000000000000000000000 
			99	94775000000000000000000000000000000000000000000000000000000000000000000000000000000000000000000000000000000 
			100	129860000000000000000000000000000000000000000000000000000000000000000000000000000000000000000000000000000000 
			101	147110000000000000000000000000000000000000000000000000000000000000000000000000000000000000000000000000000000 
			102	138380000000000000000000000000000000000000000000000000000000000000000000000000000000000000000000000000000000 
			103	108500000000000000000000000000000000000000000000000000000000000000000000000000000000000000000000000000000000 
			104	71139000000000000000000000000000000000000000000000000000000000000000000000000000000000000000000000000000000 
			105	39124000000000000000000000000000000000000000000000000000000000000000000000000000000000000000000000000000000 
			106	18094000000000000000000000000000000000000000000000000000000000000000000000000000000000000000000000000000000 
			107	7053200000000000000000000000000000000000000000000000000000000000000000000000000000000000000000000000000000 
			108	2321700000000000000000000000000000000000000000000000000000000000000000000000000000000000000000000000000000 
			109	646410000000000000000000000000000000000000000000000000000000000000000000000000000000000000000000000000000 
			110	152420000000000000000000000000000000000000000000000000000000000000000000000000000000000000000000000000000 
			111	30469000000000000000000000000000000000000000000000000000000000000000000000000000000000000000000000000000 
			112	5167400000000000000000000000000000000000000000000000000000000000000000000000000000000000000000000000000 
			113	743830000000000000000000000000000000000000000000000000000000000000000000000000000000000000000000000000 
			114	90895000000000000000000000000000000000000000000000000000000000000000000000000000000000000000000000000 
			115	9427900000000000000000000000000000000000000000000000000000000000000000000000000000000000000000000000 
		};
			\addlegendentry{$n=299$}
			\addplot [magenta,mark=triangle,mark size=1.7pt] table {
			x	y
			90	410050000000000000000000000000000000000000000000000000000000000000000000000000000000000000000000000000 
			91	4562700000000000000000000000000000000000000000000000000000000000000000000000000000000000000000000000000 
			92	38960000000000000000000000000000000000000000000000000000000000000000000000000000000000000000000000000000 
			93	258550000000000000000000000000000000000000000000000000000000000000000000000000000000000000000000000000000 
			94	1348600000000000000000000000000000000000000000000000000000000000000000000000000000000000000000000000000000 
			95	5583500000000000000000000000000000000000000000000000000000000000000000000000000000000000000000000000000000 
			96	18509000000000000000000000000000000000000000000000000000000000000000000000000000000000000000000000000000000 
			97	49512000000000000000000000000000000000000000000000000000000000000000000000000000000000000000000000000000000 
			98	107610000000000000000000000000000000000000000000000000000000000000000000000000000000000000000000000000000000 
			99	191210000000000000000000000000000000000000000000000000000000000000000000000000000000000000000000000000000000 
			100	279280000000000000000000000000000000000000000000000000000000000000000000000000000000000000000000000000000000 
			101	336970000000000000000000000000000000000000000000000000000000000000000000000000000000000000000000000000000000 
			102	337340000000000000000000000000000000000000000000000000000000000000000000000000000000000000000000000000000000 
			103	281300000000000000000000000000000000000000000000000000000000000000000000000000000000000000000000000000000000 
			104	196070000000000000000000000000000000000000000000000000000000000000000000000000000000000000000000000000000000 
			105	114580000000000000000000000000000000000000000000000000000000000000000000000000000000000000000000000000000000 
			106	56294000000000000000000000000000000000000000000000000000000000000000000000000000000000000000000000000000000 
			107	23306000000000000000000000000000000000000000000000000000000000000000000000000000000000000000000000000000000 
			108	8147200000000000000000000000000000000000000000000000000000000000000000000000000000000000000000000000000000 
			109	2408900000000000000000000000000000000000000000000000000000000000000000000000000000000000000000000000000000 
			110	603300000000000000000000000000000000000000000000000000000000000000000000000000000000000000000000000000000 
			111	128120000000000000000000000000000000000000000000000000000000000000000000000000000000000000000000000000000 
			112	23090000000000000000000000000000000000000000000000000000000000000000000000000000000000000000000000000000 
			113	3533400000000000000000000000000000000000000000000000000000000000000000000000000000000000000000000000000 
			114	459250000000000000000000000000000000000000000000000000000000000000000000000000000000000000000000000000 
			115	50697000000000000000000000000000000000000000000000000000000000000000000000000000000000000000000000000 
		};
			\addlegendentry{$n=300$}
		\end{axis}
	\end{tikzpicture}
	\caption{Curves of $ELO(n,k)$.}\label{fig:visualization}
\end{figure}
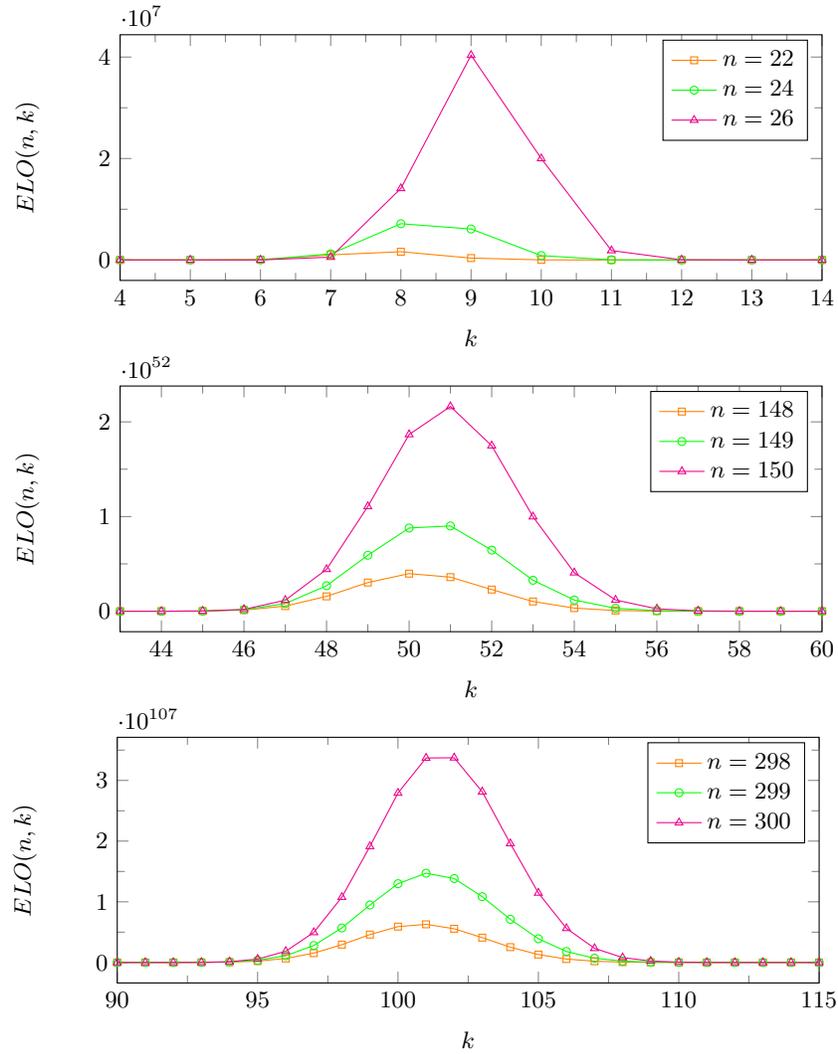

\section*{Acknowledgments}
We are grateful to the anonymous referees for their careful reading and valuable comments and suggestions.
This work was supported by the National Natural Science Foundation of China (Grant No. 12071235 and 11501307), and the Fundamental Research Funds for the Central Universities.

\end{document}